\author{Jonathan Breuer and Eyal Seelig \footnote{Institute of Mathematics, The Hebrew University of Jerusalem, Jerusalem, 91904, Israel.
Supported in part by the Israel Science Foundation (Grant No. 399/16) and in part by the United States-Israel Binational Science
Foundation (Grant No. 2014337), Emails: jbreuer@math.huji.ac.il, eyal.seelig@mail.huji.ac.il}}
\title{On the spacing of zeros of paraorthogonal polynomials for singular measures}
\numberwithin{equation}{section}
\theoremstyle{definition}
\newtheorem{defin}{Definition}[section]
\theoremstyle{remark}
\newtheorem*{rem}{Remark}
\theoremstyle{plain}
\newtheorem{thm}{Theorem}[section]
\newtheorem{lem}[thm]{Lemma}
\newcommand{\C}{{\mathbb{C}}}
\newcommand{\N}{{\mathbb{N}}}
\newcommand{\Q}{{\mathbb{Q}}}
\newcommand{\Z}{{\mathbb{Z}}}
\newcommand{\D}{{\mathbb{D}}}
\newcommand{\sss}[2]{\underset{#1}{\overset{#2}{\sum}}}
\newcommand{\eps}{\varepsilon}
\newcommand{\conv}[2]{\underset{#1 \to #2}{\longrightarrow}}
\newcommand{\el}{{\ell}}
\newcommand{\eel}{{(\ell)}}
\newcommand{\eelp}{{(\ell+1)}}
\newcommand{\p}{{\varphi}}
\newcommand{\pl}[1]{{\varphi_{{#1}}^\eel}}
\newcommand{\plp}[1]{{\varphi_{{#1}}^\eelp}}
\newcommand{\pls}[1]{{\varphi_{{#1}}^{\eel^*}}}
\newcommand{\plps}[1]{{\varphi_{{#1}}^{\eelp^*}}}
\newcommand{\eit}{{e^{i\theta}}}
\newcommand{\ei}[1]{{e^{i{#1}}}}
\newcommand{\thn}[1]{{\theta^{(n)}_{#1}}}
\newcommand{\thnk}[1]{{\theta^{(n_k)}_{#1}}}
\newcommand{\K}{{K_n}}
\newcommand{\Kl}{{K_n^\eel}}
\newcommand{\Klp}{{K_n^\eelp}}
\newcommand{\ml}{{\mu^\eel}}
\newcommand{\vlp}{{v_{\el+1}}}
\newcommand{\nl}{{N_\el}}
\newcommand{\nlp}{{N_{\el+1}}}
\newcommand{\qlp}{{B_{\el+1}}}
\newcommand{\ob}{{\ol{b}}}
\newcommand{\ol}[1]{{\overline{#1}}}
\newcommand{\Leb}{{\mathcal{L}}}
\newcommand{\an}{{\alpha_n}}
\newcommand{\hatn}{{\widehat{N}\eel}}
\newcommand{\se}[1]{{ \left\{#1 \right\}_{n=0}^\infty}}
\newcommand{\see}[1]{{ \{#1 \}_{n=0}^\infty}}
\newcommand{\sel}[1]{{ \left\{#1 \right\}_{\el=0}^\infty}}
\newcommand{\seki}[1]{{ \left\{#1 \right\}_{k=0}^{\infty}}}
\newcommand{\severb}{{\se{\an}}}
\newcommand{\sedecay}{{\sel{v_\el}}}
\newcommand{\sesparse}{{\sel{N_\el}}}
\newcommand{\imaginary}[1]{{\text{Im} #1}}
\newcommand{\abrange}{{\{ | \text{Im} z | < \frac{1}{2} \}}}
\newcommand{\parD}{{\partial\D}}
\newcommand{\plnleit}{{\left|\pl{\nl+1}(\eit)\right|}}
\newcommand{\plpkeit}{{\left|\plp{k}(\eit)\right|}}
\newcommand{\plkeit}{{\left|\pl{k}(\eit)\right|}}
\newcommand{\zerodiff}{{\left(\thn{j+1} - \thn{j}\right)}}
\newcommand{\energy}{{z}}
\newcommand{\en}{{\energy}}
\newcommand{\enn}{{\energy'}}
\newcommand{\ennn}{{\energy''}}
\newcommand{\op}[2]{{#1_{#2}(\en)}}
\newcommand{\opt}[2]{{#1_{#2}(\enn)}}
\newcommand{\St}[1]{{\op{S}{#1}}}
\newcommand{\Tr}[1]{{\op{T}{#1}}}
\newcommand{\Stt}[1]{{\opt{S}{#1}}}
\newcommand{\Trr}[1]{{\opt{T}{#1}}}
\renewcommand{\Q}[1]{{Q_{#1}(\en, \enn)}}
\newcommand{\q}[1]{{{#1}^{-1}}}
\newcommand{\dg}{\dagger}
\begin{document}

\maketitle

\begin{abstract}
We prove a lower bound on the spacing of zeros of paraorthogonal polynomials on the unit circle, based on continuity of the underlying
measure as measured by Hausdorff dimensions. We complement this with the analog of the result from \cite{breuer11} showing that clock spacing
holds even for certain singular continuous measures.
\end{abstract}


\section{Introduction}

This paper is concerned with the spacing of zeros of paraorthogonal polynomials on the unit circle (POPUC), and in particular in the
connection between these spacings and continuity properties of the underlying measure. Its purpose is twofold. The first is to describe a very
general observation connecting measure continuity to local zero spacing (which, to the best of our knowledge, is new in the real line case as
well). The observation is that the degree of continuity of the underlying measure, in terms of comparison with $\alpha$-dimensional Hausdorff
measure, implies a lower bound on the local spacing of the zeros. The second aim of this paper is to present the POPUC analog of an example on
the real line \cite{breuer11, weissman} that shows that singular measures may still have strong asymptotic repulsion, implying that
\emph{upper} bounds coming from singularity of the measure are probably more subtle.

To set the stage, let $\mu$ be a probability measure supported on an infinite subset of $\partial \mathbb{D}$ -- the unit circle. We denote
the (normalized) orthogonal polynomials associated with $\mu$ by the sequence $\se{\p_n}$, which is uniquely defined by the fact that $\p_n$
is a polynomial of degree $n$ with a positive leading coefficient and the orthogonality relation
$$\int_{\partial \D} \p_n \ol{\p_m} d \mu = \delta_{nm}.$$
The sequence $\se{\p_n}$ is well known to satisfy the Szeg\H{o} recurrence (see, e.g., \cite{simon09})
	\begin{equation}\label{recrel}
		\p_{n+1}(z) =
		\rho_n^{-1} \left(
		z \p_n(z)
		- \ol{\an} \p^*_n(z)
		\right)
	\end{equation}
	where $\p^*_n(z) = z^n \ol{\p_n(1/\ol{z})}$, $\rho_n = \left(1 - |\an|^2 \right)^{1/2}$, and the sequence $\severb$, known as the sequence
of Verblunsky coefficients, is a sequence of complex numbers inside the open unit disk, which are uniquely determined by the measure $\mu$.
	
Given a sequence of orthogonal polynomials on the unit circle (OPUC), $\se{\p_n}$ as above, and an additional sequence, $\se{\beta_n}$, of
numbers on the unit circle we may define the corresponding sequence of paraorthogonal polynomials through
	\begin{equation}\label{def-para}
		H_n^{(\beta_{n-1})}(z) :=
		z \p_{n-1}(z) -
		\ol{\beta_{n-1}} \p_{n-1}^*(z).
	\end{equation}

Paraorthogonal polynomials, introduced in \cite{Jones}, have received some attention in recent years due to their natural appearance in
various models both inside and outside the realm of orthogonal polynomial theory. These areas include random matrix theory \cite{KN, KS},
quadrature \cite{Golinskii}, electrostatic problems on the circle \cite{Simanek2}, and the computation of numerical ranges of multiplication
operators \cite{MSS}. More importantly in the context of the present paper, the zeros of paraorthogonal polynomials are in a sense, the
`correct' analog of zeros of orthogonal polynomials on the real line (OPRL): while the zeros of $\p_n$ are known to be inside the open unit
disc \cite{simon09}, the zeros of $H_n^{(\beta_{n-1})}$ are known to lie on $\partial\mathbb{D}$ \cite[Section 2.2]{simon09}. In fact, they
are eigenvalues of a unitary truncation of the CMV matrix associated with the Verblunsky coefficients $\severb$ in much the same way as the
zeros of the $n$'th OPRL are the eigenvalues of a self-adjoint truncation of a Jacobi matrix (see \cite[Sections 2.2 and
8.2]{simon09} for details). Other relevant references include \cite{cantero06,Castillo,Martinez,Simanek1,simon06,simon07Rankone,wong07}.
Questions about the asymptotic distribution of these zeros on $\partial \mathbb{D}$ are thus natural and have been studied in various contexts
which we discuss in greater detail below. In this paper we focus on the connection between the continuity of $\mu$ and the local spacing of
these zeros.

As a final preliminary, we remind the reader of the definition of $\alpha$-dimensional Hausdorff measure, $h^\alpha$. Given $0\leq \alpha \leq
1$ and a nonempty set $S \subseteq \partial \mathbb{D}$
\begin{equation} \label{eq:HausdorffMeasure}
h^\alpha(S)=\lim_{\delta \rightarrow 0}\inf_{\delta-\textrm{covers}}\sum_{j=1}^\infty |I_j|^\alpha.
\end{equation}
In this definition, $\{I_j\}_{j=1}^\infty$ is called a $\delta$-cover of $S$ if for each $j$, $I_j$ is an arc of length $|I_j| < \delta$ and
$S \subseteq \cup_{j=1}^\infty I_j$. The infimum is taken over all $\delta$-covers. It is known \cite{rogers} that the above limit exists
(being possibly $\infty$) for any nonempty $S \subseteq \partial \mathbb{D}$ and that the restriction of $h^\alpha$ to Borel subsets defines a
measure. Note that $h^0$ is the counting measure and $h^1$ is the arc-length measure (=Lebesgue measure on $\partial \mathbb{D}$). Moreover, for
any $S \subseteq \partial \mathbb{D}$, there exists a unique $\alpha(S)\in [0,1]$ so that for any $\alpha<\alpha(S)$, $h^\alpha(S)=\infty$ and for any
$\alpha>\alpha(S)$, $h^\alpha(S)=0$. $\alpha(S)$ is known as the Hausdorff dimension of $S$. For more on Hausdorff measures and dimensions see
\cite{rogers}.

In order to present our results we need to label the zeros around a fixed point on the unit circle. Thus, let $\ei{\Theta} \in \partial \D$ be
fixed and let us label the zeros of $H_n^{(\beta_{n-1})}$ in the following way:
\begin{equation}\label{def-zeros-labels}
		\dots <
		\thn{-1}(\Theta) <
		\Theta \leq
		\thn{0}(\Theta) <
		\thn{1}(\Theta) <
		\dots.
	\end{equation}
\begin{rem}
	We omit $\beta_{n-1}$ from the notation for $\thn{j}(\Theta)$
	in order to streamline the presentation.
	The dependence of $\thn{j}(\Theta)$ on $\beta_{n-1}$
	below will be clear from the context.
\end{rem}
\begin{thm} \label{thm:ContLowerBound}
Let $\mu$ be an infinitely supported probability measure on the unit circle and let $\{\beta_n \}_{n=0}^\infty$ be a sequence of numbers
satisfying $|\beta_n|=1$. Then
\begin{enumerate}
\item If $\mu_{\textrm{ac}}$ is the component of $\mu$ that is absolutely continuous w.r.t.\ Lebesgue measure on $\partial \mathbb{D}$, then
    for $\mu_{\textrm{ac}}$-almost every $z=\ei{\Theta}$ we have
		\begin{equation} \label{eq:ACLower}
			\underset{n\to\infty}
			{\limsup}
			\,
			n (\thn{0}(\Theta) - \thn{-1}(\Theta))
			> 0.
		\end{equation}

\item		Fix $\gamma > 1$, and let
		\begin{equation*}
			A
			=
			\left\{
			\ei\Theta \in \partial\D
			\thinspace \middle| \thinspace
			\underset{n\to\infty}
			{\lim \inf}
			\,
			n^\gamma
			\left(
			\thn{0}(\Theta) - \thn{-1}(\Theta)
			\right)
			< \infty
			\right\}
		\end{equation*}

		Then the restricted measure $\mu(A\cap\cdot)$
		is supported
		on a set of Hausdorff dimension
		at most $\frac{2}{1+\gamma}$.

It follows that for any $0<\alpha<1$, if $\mu$ gives zero weight to sets of Hausdorff dimension at most $\alpha$ (in particular, if $\mu$ is
absolutely continuous with respect to $h^{\alpha+\eps}$ for some $\eps>0$), then for $\mu$-a.e.\ $\Theta$
\begin{equation} \nonumber
\lim_{n\rightarrow \infty} n^\gamma \left(\thn{0}(\Theta) - \thn{-1}(\Theta)\right)=\infty
\end{equation}
for $\gamma=\frac{2}{\alpha}-1$.

	\end{enumerate}
	\end{thm}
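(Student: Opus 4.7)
My plan is to split the argument into a universal spacing lower bound in terms of the Christoffel function, followed by a measure-theoretic step relating that to Hausdorff continuity.

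For the first piece, set $\lambda_n(\Theta) := K_n(\Theta,\Theta)^{-1}$ with $K_n(\zeta,\zeta) = \sum_{k=0}^{n-1}|\p_k(\zeta)|^2$, and use the Prüfer phase $\eta_{n-1}(\theta)$ defined by $\p_{n-1}(\eit) = |\p_{n-1}(\eit)|\,e^{i\eta_{n-1}(\theta)}$. Substituting into \eqref{def-para}, the zeros of $H_n^{(\beta_{n-1})}$ are precisely the $\theta$'s with $2\eta_{n-1}(\theta)-(n-1)\theta \equiv -\arg\beta_{n-1}\pmod{2\pi}$; a Christoffel--Darboux computation based on \eqref{recrel} bounds the phase derivative $\eta'_{n-1}(\theta)$ above by a multiple of $K_n(\eit,\eit)$. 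Integrating over $[\thn{-1}(\Theta),\thn{0}(\Theta)]$ --- where the phase advances by a full $2\pi$ --- and using a Bernstein-type estimate to compare $K_n$ at points within $O(1/n)$ of $\Theta$ to its value at $\Theta$ yields
$$\thn{0}(\Theta) - \thn{-1}(\Theta) \ \geq\ c\,\lambda_n(\Theta).$$
Part (1) is then immediate: Totik's OPUC analog of the Máté--Nevai--Totik theorem gives $n\lambda_n(\Theta)\to w(\Theta)>0$ for $\mu_{\mathrm{ac}}$-a.e.\ $\Theta$ (where $w = d\mu_{\mathrm{ac}}/d\theta$), yielding \eqref{eq:ACLower}.

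For Part (2), starting from the variational characterization $\lambda_n(\Theta) = \min\{\int|p|^2d\mu : \deg p\leq n,\,p(\Theta)=1\}$, the Christoffel extremal polynomial $p_n^*(\cdot) = K_n(\cdot,\Theta)/K_n(\Theta,\Theta)$ satisfies $|p_n^*|\geq 1/2$ on some ball $B(\Theta,r)$, with $r$ estimated via Bernstein in terms of $\|p_n^*\|_\infty \leq \sqrt{K_n^{\max}/K_n(\Theta,\Theta)}$ (Cauchy--Schwarz applied to the CD kernel); consequently $\mu(B(\Theta,r))\leq 4\lambda_n(\Theta)$. Optimizing the resulting scale and combining with Step 1, if $\liminf_n n^\gamma(\thn{0}(\Theta)-\thn{-1}(\Theta))<\infty$ at $\Theta\in A$, then along a subsequence $n_k\to\infty$ one obtains $\mu(B(\Theta,r_{n_k}))\leq C\,r_{n_k}^{2/(1+\gamma)}$ for a suitable $r_{n_k}\to 0$. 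Hence $\liminf_{r\to 0}\mu(B(\Theta,r))/r^{2/(1+\gamma)} < \infty$ for every $\Theta\in A$, and a Rogers--Taylor density theorem implies that $\mu|_A$ is supported on a set of Hausdorff dimension at most $2/(1+\gamma)$. The corollary about $\mu\ll h^{\alpha+\eps}$ follows since such measures give zero weight to sets of Hausdorff dimension $\leq\alpha$.

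\textbf{Main obstacle.} The delicate point is the precise exponent $2/(1+\gamma)$ in the Christoffel-to-measure comparison. The standard Fejér inequality $\lambda_n(\Theta)\leq C\mu(B(\Theta,1/n))$ runs in the wrong direction for us, while a one-scale Bernstein estimate only recovers the weaker $\dim\leq\gamma$; getting the sharp exponent appears to require careful multi-scale optimization balancing the $L^\infty$-to-$L^2$ ratio of $p_n^*$ (i.e.\ the spread of $K_n(\cdot,\Theta)$ off the diagonal) against the shrinking ball on which $\mu$ gets controlled. I expect this refinement to be the heart of the technical work.
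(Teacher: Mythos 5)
Your Step 1 is where the argument breaks: the universal inequality $\thn{0}(\Theta)-\thn{-1}(\Theta)\geq c\,\lambda_n(\Theta)$ with $\lambda_n(\Theta)=K_n(\ei{\Theta},\ei{\Theta})^{-1}$ is false. Take any $\mu$ with an atom at a non-isolated point of its support, e.g.\ $\mu=\tfrac12\delta_1+\tfrac12\tfrac{d\theta}{2\pi}$ and $\Theta=0$: then $\lambda_n(\Theta)\geq\mu(\{1\})=\tfrac12$ for all $n$, while the zeros of $H_n$ become dense so that $\thn{0}(0)-\thn{-1}(0)\to0$. The Pr\"ufer-phase computation shows why: between consecutive zeros the phase of $z\p_{n-1}(z)/\p_{n-1}^*(z)$ advances by exactly $2\pi$ at rate $K_n(\eit,\eit)/|\p_{n-1}(\eit)|^2$, so a lower bound on the gap requires an upper bound on this ratio \emph{throughout} the gap, and $|\p_{n-1}(\eit)|^2$ can be far smaller than $K_n(\eit,\eit)$ there; no Bernstein estimate at the single point $\Theta$ controls this. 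The correct universal bound --- the paper's Theorem 2.4, the OPUC analog of Last--Simon --- is $(\sum_{k<n}\|T_k(\ei{\Theta})\|^2)^{-1}$, which necessarily involves the second-kind polynomials $\psi_k$ as well; since $\sum_k\|T_k\|^2\geq K_n(\ei{\Theta},\ei{\Theta})$ on $\parD$, this is genuinely weaker than $\lambda_n(\Theta)$, and the discrepancy is exactly the issue for singular measures. Note also that even restricted to $\mu_{\textrm{ac}}$-a.e.\ points, your inequality combined with M\'at\'e--Nevai--Totik would give a one-sided clock bound of the strength conjectured by Avila--Last--Simon, which the paper explicitly flags as open; Part 1 in the paper instead only uses that $\liminf\frac1n\sum_{k<n}\|T_k\|^2<\infty$ on an essential support of $\mu_{\textrm{ac}}$.

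Part 2 has a second, independent problem: the measure-theoretic step runs backwards. From small gaps your chain produces \emph{upper} bounds $\mu(B(\Theta,r))\leq Cr^{\beta}$, which is a continuity statement; by Rogers--Taylor that yields $\mu(A\cap\cdot)\ll h^{\beta}$, i.e.\ a \emph{lower} bound on the dimension of any support of $\mu(A\cap\cdot)$ --- the opposite of the claim. To conclude ``supported on a set of dimension $\leq\beta$'' one needs $\limsup_{r\to0}\mu(B(\Theta,r))/r^{\beta}=\infty$ on $A$, i.e.\ concentration, not spreading; this mismatch is a symptom of Step 1 pointing the wrong way. The paper's route is different in kind: a small gap forces $\sum_{k<n}\|T_k\|^2\gtrsim n^{\gamma}$ along a subsequence; since $\|\p_\cdot(z)\|_n\leq C_\eta n^{1/2+\eta}$ for $\mu$-a.e.\ $z$ and $\gamma>1$, the growth must come from $\|\psi_\cdot(z)\|_n$, so $\|\p_\cdot(z)\|_n/\|\psi_\cdot(z)\|_n^{\delta}\to0$ along a subsequence with $\delta=1/(\gamma-\eps)$, and Jitomirskaya--Last power-law subordinacy theory for OPUC then gives the dimension bound $2\delta/(1+\delta)=2/(1+\gamma-\eps)$. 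So the ``heart of the technical work'' is not a multi-scale Christoffel-to-measure comparison but subordinacy theory applied to the second-kind polynomials.
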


\begin{rem}
Many examples of absolutely continuous measures exhibit spacing which is known as local clock behavior (see Definition \ref{def:clock} below).
This is a considerably stronger form of repulsion than that exhibited in \eqref{eq:ACLower}. The bound in \eqref{eq:ACLower}, however, is
completely general (nevertheless, note that \cite{last10} conjecture a weak form of clock behavior for $\mu_{\textrm{ac}}$-a.e.\ point for any
measure). The fundamentally new result in Theorem \ref{thm:ContLowerBound} is part 2 which, to the best of our knowledge, is the only existing
result tying Hausdorff continuity of a measure to OP zero spacing.
\end{rem}

\begin{rem}
The analogous result for zeros of orthogonal polynomials on the real line (OPRL) holds as well. It is in fact an immediate consequence of
\cite[Theorem 2.2]{last08} and \cite[Theorem 1.1]{LastSimonAC} and \cite[Corollary 4.2]{last99}. The analogs of \cite[Theorem
1.1]{LastSimonAC} and \cite[Corollary 4.2]{last99} for the unit circle appear essentially in \cite[Chapter 10]{simon09}. As for the unit
circle analog of \cite[Theorem 2.2]{last08}, a discussion in Section 10 of \cite{last08} describes a strategy of proof and a consequence. For
completeness we state and prove the precise analog in Section 2 below, following which we give the proof of Theorem \ref{thm:ContLowerBound}.
\end{rem}

\begin{rem}
As mentioned above, recent years have seen various works studying zero spacing for paraorthogonal polynomials. For random Verblunsky
coefficients, the works \cite{KS, stoiciu} show a transition from Poisson to clock behavior via asymptotic $\beta$-ensemble statistics (indeed
showing, in this particular case, a correlation between measure continuity and local repulsion). From a slightly different perspective, the
papers \cite{Golinskii,last08,Simanek1, Simanek3} study the connection between regularity properties of $\severb$ and these spacings. In
particular \cite{last08} obtain sufficient conditions ensuring clock behavior, whereas \cite{Golinskii,Simanek3} obtain global upper bounds on
the spacing depending on the decay rate of $\severb$. Other works associating properties of $\mu$ and the zeros of the associated
paraorthogonal polynomials include \cite{Castillo,Martinez,simon06,simon07Rankone}.
\end{rem}

A very strong form of local repulsion between the $\thn{j}(\Theta)$ (already mentioned above) is known as \emph{clock behavior}. This is
defined as follows:
\begin{defin} \label{def:clock}
We say there is clock behavior at $e^{i\Theta} \in \partial \mathbb{D}$ if for every $j\in\Z$
\begin{equation*}
					n \left(\thn{j+1}(\Theta)-\thn{j}(\Theta) \right)
					\conv{n}{\infty}
					2 \pi.
				\end{equation*}
\end{defin}

While the term `clock behavior' was originally defined and studied in the context of the unit circle \cite{simon09} (since in this case the
zeros distribute like dials on a clock), it was studied more extensively in the context of the real line, where it was found to be connected
to universality limits of the Christoffel-Darboux (CD) kernel \cite{LubinskyRev,simon08}. Explicitly, the Freud-Levin-Lubinsky Theorem
\cite{freud71,levin08,simon08} says that convergence of the rescaled CD kernel to the sine kernel (aka `bulk universality') implies clock
behavior at the relevant point. Universality limits have been extensively studied mainly because of their connection to the phenomenon of
universality in random matrix theory. In particular, bulk universality (and therefore clock) was shown to occur for generic points in many
cases of absolutely continuous measures on $\mathbb{R}$ (for a review on some of the relevant literature on universality see
\cite{LubinskyRev}).

In light of the above results and discussion, it is natural to wonder whether singularity of $\mu$ implies less regularity of the asymptotic
zero spacing. The example in \cite{breuer11} (see \cite{weissman} for a continuum Schr\"odinger operator analog) shows that the situation in
the case of $\mathbb{R}$ is more subtle. By considering the Jacobi coefficients associated with $\mu$ on $\mathbb{R}$, \cite{breuer11}
presents a family of purely singular measures where bulk universality, and therefore clock behavior, holds at every point of $[-2,2]$. Our
second main result is the unit circle analog of this example.

\begin{thm} \label{thm:SingularClock}
		There exist purely singular continuous measures on the unit circle such that for any sequence $\{\beta_n\}_{n=0}^\infty$ with
$|\beta_n|=1$, for any $e^{i\Theta} \in \partial \mathbb{D}$, and any $j \in \mathbb{Z}$,
		\begin{equation*}
			n \left(\thn{j+1}(\Theta)-\thn{j}(\Theta) \right)
			\conv{n}{\infty}
			2 \pi.
		\end{equation*}
\end{thm}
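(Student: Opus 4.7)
The plan is to imitate the sparse construction from the real-line case \cite{breuer11}. Fix sequences $\sedecay \subset (0,1)$ with $v_\ell \to 0$ and $\sum_\ell v_\ell^2 = \infty$, $\sesparse \subset \N$ with $\nlp/\nl \to \infty$ sufficiently fast, and $\sel{\gamma_\ell} \subset \parD$, and set
\[
\alpha_n = \begin{cases} v_\ell \gamma_\ell, & n = \nl, \\ 0, & \text{otherwise.} \end{cases}
\]
Pure singular continuity of the resulting $\mu$ should then follow from the OPUC analog of Pearson's sparse perturbation theorem: telescoping the transfer matrix product gives $\log\norm{T_{\nl}(\eit)} \to \infty$ for Lebesgue-a.e.\ $\theta$ (using equidistribution of resonant phases and $\sum v_\ell^2 = \infty$), and subordinacy on the circle \cite[Chapter~10]{simon09} then rules out absolutely continuous spectrum. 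Continuity is immediate from $v_\ell \to 0$.

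The key zero-spacing simplification is that on each plateau $\nl+1 \le n-1 \le \nlp$ all intervening Verblunsky coefficients vanish, so the Szeg\H{o} recurrence \eqref{recrel} yields $\p_{n-1}(z) = z^{n-2-\nl}\p_{\nl+1}(z)$ and $\p_{n-1}^*(z) = \p_{\nl+1}^*(z)$. Substituting into \eqref{def-para} gives $H_n^{(\beta_{n-1})}(z) = z^{n-1-\nl}\p_{\nl+1}(z) - \ol{\beta_{n-1}}\p_{\nl+1}^*(z)$, and since $|\p_m(\eit)| = |\p_m^*(\eit)|$ on $\parD$, the zeros satisfy the Pr\"ufer equation
\[
(n-1-\nl)\theta + \phi_{\nl+1}(\theta) \equiv -\arg\beta_{n-1} \pmod{2\pi},
\]
where $\phi_m(\theta) := \arg(\p_m(\eit)/\p_m^*(\eit))$ is monotone with total increase $2\pi m$ on $[0,2\pi)$. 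Linearizing at any zero produces local spacing $2\pi/\bigl((n-1-\nl) + \phi_{\nl+1}'(\theta)\bigr) + O(n^{-2})$, so clock behavior reduces to
\[
\sup_{\theta \in [0,2\pi)} \left|\, \frac{\phi_{\nl+1}'(\theta)}{\nl+1} - 1 \,\right| \conv{\ell}{\infty} 0.
\]

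To establish this uniform asymptotic, I would use the standard Pr\"ufer-derivative formula writing $\phi_m'(\theta)$ as a positive sum of ratios $|\p_k^*(\eit)|^2/|\p_m^*(\eit)|^2$ (see \cite[Chapter~2]{simon09}), identically equal to $m$ in the free case. Since only finitely many $\alpha_k$ are nonzero below $\nl+1$, each perturbation changes the relevant ratios by a multiplicative factor $1 + O(v_j)$, and choosing $\sedecay$, $\sesparse$ with a fast enough decay/sparsity balance makes each correction $o(1)$ uniformly in $\theta$, giving the required normalized convergence to $1$.

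The main obstacle is precisely the uniform (in $\theta$) control of $\phi_{\nl+1}'(\theta)$: the perturbations generate $\theta$-dependent oscillatory corrections that must vanish in the supremum, forcing a delicate simultaneous tuning of $\sedecay$ and $\sesparse$. Singular continuity wants slow decay ($\sum v_\ell^2 = \infty$), whereas uniform clock spacing prefers fast decay and rapid sparsity growth, and balancing these two opposing demands, in direct analogy with \cite{breuer11}, constitutes the technical heart of the argument.
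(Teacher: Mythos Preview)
Your approach is viable and genuinely different from the paper's. The paper establishes full sine-kernel asymptotics for the CD kernel (Theorem~\ref{thm-main}) and then invokes a Freud--Levin--Lubinsky argument (Theorem~\ref{thm-clock} in the Appendix) to convert universality into clock spacing; singular continuity is taken directly from \cite[Theorem~12.5.2]{simon09} rather than reproved via subordinacy. You instead bypass the off-diagonal kernel estimates and the Hurwitz step by working with the Pr\"ufer phase of $B_m=\varphi_m/\varphi_m^*$ on plateaus and reducing clock to the single estimate $\phi'_{N_\ell+1}(\theta)/(N_\ell+1)\to 1$ uniformly.

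The two routes are closer than they first appear. Differentiating the CD identity on the circle gives exactly $\phi'_m(\theta)=K_m(e^{i\theta},e^{i\theta})/|\varphi_m(e^{i\theta})|^2$, so your target is precisely the diagonal case of the paper's kernel estimate, namely \eqref{fin_eq2} together with Sections~\ref{sec_fir}--\ref{sec_sec} at $a=b=0$. Your observation that each bump alters $|\varphi_{N_\ell+1}|^2/|\varphi_{N_\ell}|^2$ by a factor $1+O(|v_\ell|)$ is exactly what drives those sections (cf.\ \eqref{normineq} and the constant $B_{\ell+1}$), and once $N_{\ell+1}/N_\ell\to\infty$ the contribution of earlier plateaus to $K_{N_\ell+1}/|\varphi_{N_\ell+1}|^2$ is negligible regardless of how slowly $v_\ell\to 0$; so there is no real tension between $\sum v_\ell^2=\infty$ and the uniform derivative control, contrary to your final paragraph. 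What your route buys is economy: only the diagonal estimate is needed, and the spacing follows from the mean value theorem applied to $g(\theta)=(n-1-N_\ell)\theta+\phi_{N_\ell+1}(\theta)$. What the paper's route buys is the stronger conclusion of full sine-kernel universality, which is of independent interest beyond clock.

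Two minor points: the phases $\gamma_\ell$ you introduce play no role and can be dropped; and for singular continuity you may simply cite \cite[Theorem~12.5.2]{simon09}, as the paper does.
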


As in the case of the real line, we construct these examples by considering the associated Verblunsky coefficients and using the fact that the
association of $\mu$ with the sequence $\severb$ is bijective \cite{simon09}. The sequence $\severb$ that we study is sparse in the sense that
the distances between non-zero $\alpha$'s rapidly increase to infinity. We show that for a sparse decaying sequence of Verblunsky coefficients
the associated CD kernel has sine kernel asymptotics and deduce clock behavior. In this we imitate the strategy and technique of
\cite{breuer11}. We note, however, that we introduce a technical simplification that allows us to consider diagonal and non-diagonal elements
of $K_n$ simultaneously.

\begin{rem}
\cite[Theorem 1.4]{zlatos04} shows that
the measures constructed in \cite{breuer11} are
absolutely continuous with respect to 
$h^\alpha$ for every $0 \leq \alpha < 1$.
In a sense, they are as continuous as possible,
while still being singular with respect to Lebesgue measure.
Although we could not find a proof in the literature 
for the analogous case on the unit circle
(i.e. the measures constructed in Theorem \ref{thm:SingularClock}),
because of the similarities between the constructions, 
we suspect it may also be true in our case.
\end{rem}

The rest of the paper is structured as follows. In Section 2, following a few preliminaries, we present the proof of Theorem
\ref{thm:ContLowerBound}. In Section 3 we set the stage for the proof of Theorem \ref{thm:SingularClock} with a short discussion of the CD
kernel and an explicit description of the example to which the theorem pertains. In section 4 we prove Theorem \ref{thm:SingularClock}. The
appendix contains a statement and proof of the unit circle analog of the Freud-Levin-Lubinsky Theorem (that we could not find in the
literature).


\section{Lower bounds via Hausdorff dimensions}

We begin by describing the connection between the zeros of the CD kernel and the zeros of $H_n$ that will be useful in later sections as well.
The Christoffel-Darboux kernel, $K_n$, associated with $\mu$, is defined by
	\begin{equation*}
		K_n(z,w) =
		\sss{k=0}{n-1}
		\p_k(z) \ol{\p_k(w)}.
	\end{equation*}
For a sequence $\{\beta_n\}_{n=0}^\infty$, let $\se{H_n^{(\beta_{n-1})}}$ be a sequence of paraorthogonal polynomials defined in
\eqref{def-para}. Wong \cite{wong07} proved that for any $n$, and for any zero of $H_n^{(\beta_{n-1})}$, $z_0$, there exists a constant,
$c\neq 0$ so that
	\begin{equation}\label{eq-para-cd}
		H_n^{(\beta_{n-1})}(z) = c (z-z_0) K_n (z,z_0)
	\end{equation}
	Hence $z$ is also a zero of $H_n$ if and only if it is a zero of $K_n(\cdot, z_0)$. Moreover, it follows that all the zeros of $H_n$ are
simple.

Let $z\in\partial\D$. We study the following system of difference equations
	\begin{equation}\label{eq-opuc}
	\begin{split}
		u_{n+1}
		&=
		\rho_n^{-1} (\en u_n - \ol{\alpha_n} u_n^\dg)
		\\
		u_{n+1}^{\dg}
		&=
		\rho_n^{-1} (- \en \alpha_n u_n + u_n^\dg)
	\end{split}
	\end{equation}
	where $\se{\an}$ is a sequence in $\D$, and $\rho_n = \sqrt{1-|\alpha_n|^2}$.
	Writing this in matrix form we get
	\begin{equation}
		\begin{pmatrix}
		u_{n+1} \\
		u_{n+1}^\dg
		\end{pmatrix}
		=
		\rho_n^{-1}
		\begin{pmatrix}
			\en &
			-\ol{\alpha_n} \\
			-\en \alpha_n &
			1
		\end{pmatrix}
		\begin{pmatrix}
		u_{n} \\
		u_{n}^\dg
		\end{pmatrix}.
	\end{equation}
	Denote
	$\vec{u}_n =
	\begin{pmatrix}
		u_{n} \\
		u_{n}^\dg
	\end{pmatrix}$,
	and the $n$th step matrix
	$
		\St{n} =
		\rho_{n-1}^{-1}
		\begin{pmatrix}
			\en &
			-\ol{\alpha_{n-1}} \\
			-\en \alpha_{n-1} &
			1
		\end{pmatrix}
	$.
	Now the equation can be written as
	\begin{equation}\label{eq-OPUC-St}
		\vec{u}_n = \St{n}\vec{u}_{n-1}.	
	\end{equation}
	Moreover, denote the transfer matrix
	$\Tr{n} = S_n S_{n-1} \cdots S_1$,
	so we have
	\begin{equation}\label{eq-OPUC-Tr}
		\vec{u}_n = \Tr{n} \vec{u}_0.
	\end{equation}
	Pick two solutions of \eqref{eq-OPUC-Tr} with orthogonal boundary conditions
	\begin{align*}
		\begin{pmatrix}
			\p_n(z) \\
			\p_n^\dg(z)
		\end{pmatrix}
		& =
		\Tr{n}
		\begin{pmatrix}
			1 \\
			1
		\end{pmatrix} \\
		\begin{pmatrix}
			\psi_n(z) \\
			\psi_n^\dg(z)
		\end{pmatrix}
		& =
		\Tr{n}
		\begin{pmatrix}
			1 \\
			-1
		\end{pmatrix}
	\end{align*}
	so we can write the transfer matrix as
	\begin{equation}\label{eq-tr-pres}
		\Tr{n} =
		\frac{1}{2}
		\begin{pmatrix}
			\p_n(z) + \psi_n(z)
			&
			\p_n(z) - \psi_n(z) \\
			\p_n^\dg(z) + \psi_n^\dg(z)
			&
			\p_n^\dg(z) - \psi_n^\dg(z)
		\end{pmatrix}.
	\end{equation}
		
		These $\p_n$ are indeed the orthogonal polynomials corresponding to the Verblunsky coefficients $\se{\an}$, and $\p_n^\dg=\p_n^*$.
Furthermore, $\psi_n$ are also known as the second-kind orthogonal polynomials. They are actually just orthogonal polynomials with respect to
$\se{-\an}$ and in this case $\psi_n^\dg=-\psi_n^*$.
	
	\begin{lem}\label{lem-variation-of-params}
	 	Let $\see{\vec{w_n}},\see{\vec{w'_n}}$
	 	be two solutions of \eqref{eq-OPUC-Tr}
	 	for parameters $\en, \enn \in \partial\D$ respectively,
	 	having the same boundary conditions $\vec{w}_0 = \vec{w'}_0$.
	 	Then
	 	\begin{equation}\label{eq-lem-Q1}
			w'_n = w_n +
			(\enn - \en)
			\sss{m=0}{n-1}
			\frac{1}{2 \en^{m+1}}
			\left(
			\p_m^\dg(\en) \psi_n(\en) -
			\p_n(\en) \psi_m^\dg(\en)
			\right)
			w'_m
		\end{equation}
		and
		\begin{equation}\label{eq-lem-Q2}
			w'^\dg_n = w_n^\dg +
			(\enn - \en)
			\sss{m=0}{n-1}
			\frac{1}{2 \en^{m+1}}
			\left(
			\p_m^\dg(\en) \psi_n^\dg(\en) -
			\p_n^\dg(\en) \psi_m^\dg(\en)
			\right)
			w'_m.
		\end{equation}
	\end{lem}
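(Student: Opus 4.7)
The natural approach is variation of parameters applied to the inhomogeneous recurrence satisfied by the difference $\vec{d}_n := \vec{w'}_n - \vec{w}_n$. Subtracting the two instances of \eqref{eq-OPUC-St} and rearranging gives
\begin{equation*}
\vec{d}_n = \St{n}\vec{d}_{n-1} + (\Stt{n} - \St{n})\vec{w'}_{n-1},
\end{equation*}
with $\vec{d}_0 = 0$ by the shared initial condition. Because
\begin{equation*}
\Stt{n} - \St{n} = (\enn-\en)\,\rho_{n-1}^{-1}\begin{pmatrix} 1 & 0 \\ -\alpha_{n-1} & 0 \end{pmatrix},
\end{equation*}
the forcing vector at step $n$ equals $(\enn-\en)\rho_{n-1}^{-1}w'_{n-1}\begin{pmatrix} 1 \\ -\alpha_{n-1} \end{pmatrix}$, depending only on the first component of $\vec{w'}_{n-1}$. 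Iterating yields $\vec{d}_n = \sum_{m=1}^{n}\Tr{n}(\Tr{m})^{-1}(\Stt{m}-\St{m})\vec{w'}_{m-1}$, and after relabeling the index so that the sum runs from $0$ to $n-1$, the whole problem is reduced to computing $\Tr{n}(\Tr{m+1})^{-1}\begin{pmatrix}1\\-\alpha_m\end{pmatrix}$ for each $0\le m\le n-1$.

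The key algebraic input is the form \eqref{eq-tr-pres} of $\Tr{n}$, together with $\det\St{n}=\en$ (and hence $\det\Tr{n}=\en^n$), which gives an explicit adjugate formula for $(\Tr{m+1})^{-1}$. To evaluate $(\Tr{m+1})^{-1}\begin{pmatrix}1\\-\alpha_m\end{pmatrix}$ I will use the Szeg\H{o} recurrence together with the observation that, since $\p_n^\dg = \ppn$ while $\psi_n^\dg = -\psi_n^*$, both $\p_n^\dg$ and $\psi_n^\dg$ satisfy the \emph{same} recurrence $\chi_{m+1}^\dg = \rho_m^{-1}(\chi_m^\dg - \alpha_m\en\chi_m)$, and both $\p_n,\psi_n$ satisfy $\chi_{m+1} = \rho_m^{-1}(\en\chi_m - \ol{\alpha_m}\chi_m^\dg)$. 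With these, a short computation shows that the combination $(\p_{m+1}^\dg - \psi_{m+1}^\dg) + \alpha_m(\p_{m+1}-\psi_{m+1})$ collapses, via the identity $1-|\alpha_m|^2 = \rho_m^2$, to $\rho_m(\p_m^\dg - \psi_m^\dg)$, and the analogous ``$+$'' combination collapses to $-\rho_m(\p_m^\dg + \psi_m^\dg)$. Crucially, every appearance of $\en$ and of $\ol{\alpha_m}$ cancels out, leaving an expression purely at level $m$.

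Multiplying $\Tr{n}$ (using \eqref{eq-tr-pres}) against the resulting vector, the four cross-terms in each row of the $2\times 2$ product combine so that the first entry reduces to $2(\p_m^\dg\psi_n - \p_n\psi_m^\dg)$ and the second to $2(\p_m^\dg\psi_n^\dg - \p_n^\dg\psi_m^\dg)$. Combining the $\frac{1}{2\en^{m+1}}$ from $(\Tr{m+1})^{-1}$ with the surviving $\rho_m$ just produced (which cancels the $\rho_m^{-1}$ already present in $\Stt{m+1}-\St{m+1}$) and reading off the two coordinates of $\vec{d}_n$ gives exactly \eqref{eq-lem-Q1} and \eqref{eq-lem-Q2}. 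The main obstacle is pure bookkeeping: the sign conventions from $\psi_n^\dg=-\psi_n^*$ and the interplay between the starred and unstarred recurrences must be tracked carefully so that the ``accidental'' cancellation of the $|\alpha_m|^2$ terms is recognized and exploited; without that cancellation, the resulting sum would not telescope into the clean bilinear expression the lemma asserts.
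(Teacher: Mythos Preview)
Your argument is correct and follows the same variation-of-parameters/Duhamel strategy as the paper. The only notable difference is organizational: where you compute $\Tr{n}\q{\Tr{m+1}}(\Stt{m+1}-\St{m+1})$ and must then observe the cancellation $(\chi_{m+1}^\dg+\alpha_m\chi_{m+1})=\rho_m\chi_m^\dg$ to eliminate the $\alpha_m$ from the forcing vector, the paper instead groups the step matrices as $\q{\St{m}}\Stt{m}-I$ and finds this equals $\dfrac{\enn-\en}{\en}\begin{pmatrix}1&0\\0&0\end{pmatrix}$, a rank-one matrix with a single nonzero entry; this absorbs your cancellation into one line and makes the subsequent multiplication by $\Tr{n}\q{\Tr{m-1}}$ a direct read-off of one column of that matrix.
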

	\begin{proof}
		We would like to find a convenient form for the matrix
		$$
		\Q{n} := 
		\q{\Tr{n}} \Trr{n}.
		$$
		Note that 
		\begin{align*}
			\q{\Tr{n}} \Trr{n}
			&= 
			\q{\Tr{n-1}} \q{\St{n}} \Stt{n} \Trr{n-1}
			\\ &=
			\q{\Tr{n-1}} 
			\left(I + \q{\St{n}} \Stt{n} - I \right) 
			\Trr{n-1}
			\\ &=
			\Q{n-1}
			+ 
			\q{\Tr{n-1}}
			\left(\q{\St{n}} \Stt{n} - I \right) 
			\Trr{n-1}.
		\end{align*}
		Solving the equation above for the boundary condition $\Q{0}=I$, we get 
		\begin{equation*}
			\Q{n} = I + 
			\sss{m=1}{n}
			\q{\Tr{m-1}}
			\left(\q{\St{m}} \Stt{m} - I \right) 
			\Trr{m-1}.
		\end{equation*}
		Multiplying both sides of $\Trr{n}=\Tr{n}\Q{n}$ by $\vec{w'}_0$, we arrive at
		\begin{equation}\label{eq-goal}
			\vec{w'}_n 
			= 
			\vec{w}_n + 
			\sss{m=1}{n}
			\Tr{n}
			\q{\Tr{m-1}}
			(\q{\St{m}} \Stt{m} - I) 
			\vec{w}_{m-1}.
		\end{equation}
		All that is left is to calculate the summand.
		First,
		\begin{align*}
			\q{\St{m}} \Stt{m}
			&=
			\en^{-1} \rho_{m-1}^{-2}
			\begin{pmatrix}
				1 & 
				\ol{\alpha_{m-1}} \\
				\en \alpha_{m-1} & 
				\en
			\end{pmatrix}
			\begin{pmatrix}
				\enn & 
				-\ol{\alpha_{m-1}} \\
				-\enn \alpha_{m-1} & 
				1
			\end{pmatrix}
			\\
			&= 
			I + 
			\frac{\enn-\en}{\en}
			\begin{pmatrix}
				1 & 0 \\
				0 & 0
			\end{pmatrix}.
		\end{align*}
		Here we used the fact that
		$\det \St{k} = \en$
		for every $k \in \N$. 
		It also implies that
		$\det \Tr{m-1} = 
		\det \left( \St{m-1} \cdots \St{1} \right) = 
		\en^{m-1}$,
		so we conclude
		\begin{align*}
			\Tr{n} \q{\Tr{m-1}} (\q{\St{m}} \Stt{m} - I)
			=
			\frac{\enn - \en}{2 \en^m}
			\begin{pmatrix}
				\p_{m-1}^\dg(\en) \psi_n(\en) - 
				\p_n(\en) \psi_{m-1}^\dg(\en) 
				& 0 \\
				\p_{m-1}^\dg(\en) \psi_n^\dg(\en) - 
				\p_n^\dg(\en) \psi_{m-1}^\dg(\en) 
				& 0
			\end{pmatrix}.
		\end{align*}
		Plugging this back into \eqref{eq-goal}, we find that
		\begin{align*}
			\vec{w'}_n 
			&= 
			\vec{w}_n + 
			(\enn - \en)
			\sss{m=1}{n}
			\frac{1}{2 \en^m}
			\begin{pmatrix}
				\p_{m-1}^\dg(\en) \psi_n(\en) - 
				\p_n(\en) \psi_{m-1}^\dg(\en) 
				& 0 \\
				\p_{m-1}^\dg(\en) \psi_n^\dg(\en) - 
				\p_n^\dg(\en) \psi_{m-1}^\dg(\en) 
				& 0
			\end{pmatrix}
			\vec{w}_{m-1} \\
			&= 
			\vec{w}_n + 
			(\enn - \en)
			\sss{m=0}{n-1}
			\frac{1}{2 \en^{m+1}}
			\begin{pmatrix}
				\p_m^\dg(\en) \psi_n(\en) - 
				\p_n(\en) \psi_m^\dg(\en) 
				& 0 \\
				\p_m^\dg(\en) \psi_n^\dg(\en) - 
				\p_n^\dg(\en) \psi_m^\dg(\en) 
				& 0
			\end{pmatrix}
			\vec{w}_m.
		\end{align*}
	\end{proof}

	We also require another Lemma.
	\begin{lem}\label{lem-evecs-orthogonality}
		Let $\se{\p_n}$ be a sequence of orthogonal polynomials on the unit circle and $\beta \in \partial \mathbb{D}$. Let $\enn,\ennn$ be two distinct zeros of
the paraorthogonal polynomial $H_n^{(\beta)}$. Then the pair of vectors
		\begin{equation*}
			\begin{pmatrix}
				\p_0(\enn) \\
				\p_1(\enn) \\
				\vdots \\
				\p_{n-1}(\enn)
			\end{pmatrix},
			\begin{pmatrix}
				\p_0(\ennn) \\
				\p_1(\ennn) \\
				\vdots \\
				\p_{n-1}(\ennn)
			\end{pmatrix}
		\end{equation*}
		are orthogonal to each other in the Euclidean space $\C^n$.
	\end{lem}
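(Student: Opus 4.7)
The plan is to derive the orthogonality directly from Wong's identity \eqref{eq-para-cd}, which says that for any zero $z_0$ of $H_n^{(\beta)}$ there is a nonzero constant $c$ with
\begin{equation*}
H_n^{(\beta)}(z) = c\,(z-z_0)\,K_n(z,z_0).
\end{equation*}
Applying this with $z_0 = \ennn$ gives $H_n^{(\beta)}(z) = c\,(z-\ennn)\,K_n(z,\ennn)$. Plugging in $z = \enn$, which is also a zero of $H_n^{(\beta)}$, and using $\enn \neq \ennn$ so that the factor $(z-\ennn)$ does not vanish, forces $K_n(\enn,\ennn) = 0$.

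Now unwinding the definition of the CD kernel,
\begin{equation*}
K_n(\enn,\ennn) = \sum_{k=0}^{n-1} \p_k(\enn)\,\ol{\p_k(\ennn)},
\end{equation*}
which is precisely the standard Hermitian inner product in $\C^n$ of the two coordinate vectors in the statement (taken in the convention $\langle u,v\rangle = \sum_k u_k \ol{v_k}$). Hence these vectors are orthogonal, which is exactly the claim.

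There is essentially no obstacle here beyond quoting \eqref{eq-para-cd}: the one small check is to make sure the inner product convention matches, but conjugate-symmetry of $K_n$ means $K_n(\ennn,\enn) = \ol{K_n(\enn,\ennn)} = 0$ as well, so either ordering of the vectors yields orthogonality. The lemma can be stated and proved in just a few lines along these lines.
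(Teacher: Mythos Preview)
Your proof is correct and considerably more direct than the paper's. The paper builds a truncated unitary operator $U_n = P_n U|_{\mathrm{span}\{z^m\}_{m=0}^{n-1}}$ on a finite-dimensional subspace of $L^2(\mu)$, shows that its eigenvalues are exactly the zeros of $H_n^{(\beta)}$ with simple eigenspaces spanned by $H_n(z)/(z-\lambda)$, then invokes \eqref{eq-para-cd} to identify these eigenfunctions with $K_n(\cdot,\lambda)$, and finally uses unitarity of $U_n$ (in the orthonormal basis $\{\p_m\}$) to conclude that eigenvectors for distinct eigenvalues are orthogonal in $\C^n$. You short-circuit all of this by simply evaluating Wong's identity \eqref{eq-para-cd} at one zero with base point the other, forcing $K_n(\enn,\ennn)=0$, and then recognizing this as the Hermitian inner product of the two column vectors.

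Both arguments ultimately rest on \eqref{eq-para-cd}; the paper's version additionally packages the operator-theoretic picture (which has independent interest and explains \emph{why} the zeros behave like eigenvalues of a unitary matrix), while your version extracts precisely what is needed for the lemma in three lines. For the purposes of the downstream application (Theorem~\ref{thm-lower-bound}), your argument is entirely sufficient.
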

	\begin{proof}
		As $\beta$ is fixed, we omit it 
		from the notation for $H_n^{(\beta)}$
		throughout the proof.
		Let $U:L^2(\mu)\to L^2(\mu)$
		be the operator of multiplication by $z$.
		Let $P_n:L^2(\mu)\to L^2(\mu)$
		be the oblique projection operator
		into $span\{z^m\}_{m=0}^{n-1}$
		along $span\{H_n, \p_{n+1}, \p_{n+2}, \dots\}$.
		Now define another operator
		\begin{equation*}
			U_n := P_n U \vert_{span\{z^m\}_{m=0}^{n-1}}.
		\end{equation*}
		As discussed in \cite{cantero06}, this is the appropriate way
		to truncate the unitary operator $U$ in order to
		get a unitary operator on a finite-dimensional
		subspace of $L^2(\mu)$.
		This operator acts on ${span\{z^m\}_{m=0}^{n-1}}$ by
		\begin{equation*}
			(U_n f)(z)
			=
			zf(z)
			-
			\frac
			{\left<
			zf, \p_n
			\right>}
			{\left<
			H_n, \p_n
			\right>}
			H_n(z)
		\end{equation*}
		Let $\lambda$ be an eigenvalue of $U_n$,
		then there exists an eigenfunction
		$f \in span\{z^m\}_{m=0}^{n-1}$
		such that
		\begin{align*}
			zf(z)
			-
			\frac
			{\left<
			zf, \p_n
			\right>}
			{\left<
			H_n, \p_n
			\right>}
			H_n(z)
			&=
			\lambda f \\
			&\Updownarrow \\
			\frac
			{\left<
			zf, \p_n
			\right>}
			{\left<
			H_n, \p_n
			\right>}
			H_n(z)
			&=
			(z-\lambda) f,
		\end{align*}
		so $\lambda$ is a zero of $H_n$.
		Moreover, $f$ lies in the
		one-dimensional space spanned by the function
		$\frac{H_n(z)}{z-\lambda}$,
		so every eigenvalue is simple.
		Thus the set of eigenvalues of $U_n$
		equals the set of zeros of $H_n$.
		In particular, $\enn, \ennn$ are
		two distinct eigenvalues of $U_n$.

		By \eqref{eq-para-cd},
		there exist constants $0\neq c_1,c_2 \in \C$
		such that
		\begin{align*}
			\frac
			{H_n(z)}
			{z-\enn}
			&=
			c_1 \K(z, \enn) \\
			\frac
			{H_n(z)}
			{z-\ennn}
			&=
			c_2 \K(z, \ennn),
		\end{align*}
		so $\K(z,\enn), \K(z,\ennn)$
		are eigenfunctions of $U_n$
		associated with the distinct eigenvalues
		$\enn,\ennn$ respectively.
		Therefore
		$$
		\left<
		\K(z,\enn), \K(z,\ennn)
		\right>_{L^2(\mu)}
		= 0.
		$$
		Taking $B=\{\p_m(z)\}_{m=0}^{n-1}$
		as a basis for $span\{z^m\}_{m=0}^{n-1}$,
		we write the coordinates vector of the eigenfunctions
		\begin{equation*}
			[\K(z,\enn)]_B =
			\begin{pmatrix}
				\ol{\p_0(\enn)} \\
				\vdots \\
				\ol{\p_{n-1}(\enn)}
			\end{pmatrix}
			, \quad
			[\K(z,\ennn)]_B =
			\begin{pmatrix}
				\ol{\p_0(\ennn)} \\
				\vdots \\
				\ol{\p_{n-1}(\ennn)}
			\end{pmatrix}.
		\end{equation*}
		These vectors in $\C^n$ are again
		orthogonal to each other as eigenvectors of $[U_n]_B$
		associated with distinct eigenvalues $\enn,\ennn$.
		We conclude that also their complex conjugates
		$$
			\begin{pmatrix}
				\p_0(\enn) \\
				\vdots \\
				\p_{n-1}(\enn)
			\end{pmatrix}
			,
			\begin{pmatrix}
				\p_0(\ennn) \\
				\vdots \\
				\p_{n-1}(\ennn)
			\end{pmatrix}
		$$
		are orthogonal to each other.
	\end{proof}
	We now use these two lemmas to prove the following unit circle version of \cite[Theorem 2.2]{last08}
	\begin{thm}\label{thm-lower-bound}
		Let $\en=\ei{\Theta}\in\partial\D$, $\beta \in \partial \D$, and let
		\begin{equation}
			\thn{-1}:=\thn{-1}(\Theta) <
			\Theta \leq
			\thn{0}(\Theta)=:\thn{0}	
		\end{equation}
		be as in \eqref{def-zeros-labels},
		i.e. $\enn=\ei{\thn{-1}}$ and $\ennn=\ei{\thn{0}}$
		are a pair of consecutive zeros
		of the paraorthogonal polynomial $H_n^{(\beta)}$
		around $\en$.
		Then
		\begin{equation}
			\left|\thn{0} - \thn{-1}\right|
			\geq
			\left(
			\sss{k=0}{n-1}||\Tr{k}||^2
			\right)^{-1}
		\end{equation}
	\end{thm}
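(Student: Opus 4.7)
My plan is to combine the orthogonality relation of Lemma \ref{lem-evecs-orthogonality} with the variation-of-parameters formula of Lemma \ref{lem-variation-of-params}, using $\en = \ei{\Theta}$ as the base energy throughout. Since $\enn$ and $\ennn$ are distinct zeros of $H_n^{(\beta)}$, Lemma \ref{lem-evecs-orthogonality} gives
\begin{equation*}
\sum_{k=0}^{n-1} \p_k(\enn)\, \overline{\p_k(\ennn)} = 0.
\end{equation*}
The idea is to turn the smallness of $|\ennn - \enn|$ into a constraint that forces $\sum_k \|\Tr{k}\|^2$ to be large. Note that both zeros lie in the arc of length $|\thn{0} - \thn{-1}|$ containing $\en$, so $|\enn - \en|$ and $|\ennn - \en|$ are each at most $|\ennn - \enn|$.

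Next, I would apply Lemma \ref{lem-variation-of-params} twice, with base energy $\en$ and target energies $\enn$ and $\ennn$, to write
\begin{equation*}
\p_k(\enn) = \p_k(\en) + (\enn - \en)\, \Delta_k^{(1)}, \qquad
\p_k(\ennn) = \p_k(\en) + (\ennn - \en)\, \Delta_k^{(2)},
\end{equation*}
where each $\Delta_k^{(j)}$ is an explicit sum over $m < k$ of products $\p_m^\dg(\en)\psi_k(\en) - \p_k(\en)\psi_m^\dg(\en)$ times a perturbed solution at step $m$. By the presentation \eqref{eq-tr-pres}, each of $\p_m(\en), \p_m^\dg(\en), \psi_m(\en), \psi_m^\dg(\en)$ is controlled by a constant multiple of $\|\Tr{m}\|$. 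Substituting both expansions into the orthogonality relation singles out the zeroth-order term $K_n(\en, \en) = \sum_{k=0}^{n-1} |\p_k(\en)|^2$, while the cross and second-order terms are each $O(|\ennn - \enn|)$. Two applications of Cauchy--Schwarz should then give
\begin{equation*}
K_n(\en, \en) \;\leq\; C\, |\ennn - \enn|\, \sqrt{K_n(\en, \en)}\, \sum_{k=0}^{n-1} \|\Tr{k}\|^2.
\end{equation*}
Since $K_n(\en, \en) \geq |\p_0(\en)|^2 = 1$, dividing through by $\sqrt{K_n(\en, \en)}$ yields a lower bound on $|\ennn - \enn|$, and since $|\thn{0} - \thn{-1}| \geq |\ennn - \enn|$, the theorem follows.

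The hard part, I expect, will be keeping the bound linear in $\sum_k \|\Tr{k}\|^2$ rather than quadratic. The issue is that Lemma \ref{lem-variation-of-params} is a Volterra-type identity whose right-hand side still involves the perturbed solutions $\p_m(\enn)$ or $\p_m(\ennn)$, so naive bootstrapping (replacing these by $\|T_m(\enn)\| \lesssim \|\Tr{m}\|(1+|\ennn-\en|\sum \|\Tr{\ell}\|^2)$) would introduce an extra factor of $\sum_\ell \|\Tr{\ell}\|^2$ and give a lower bound of the wrong order. To avoid this, I would exploit the rank-one identity $\q{\St{m}} \Stt{m} - I = \tfrac{\enn - \en}{\en}\left(\begin{smallmatrix} 1 & 0 \\ 0 & 0 \end{smallmatrix}\right)$ obtained inside the proof of Lemma \ref{lem-variation-of-params}: it expresses $Q_k(\en, \ennn) - I$ as a telescoping sum of rank-one operators with a very rigid structure, so the first component of $T_k(\en)(Q_k - I)\mathbf{e}_0$ factorises into a single scalar coefficient times one factor involving $\|\Tr{k}\|$, rather than two. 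This rank-one cancellation is what should allow Cauchy--Schwarz to produce the correct power of $\sum_k \|\Tr{k}\|^2$.
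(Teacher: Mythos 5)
Your overall strategy --- combining Lemma \ref{lem-evecs-orthogonality} with the variation-of-parameters identity of Lemma \ref{lem-variation-of-params} expanded around $\en$, and controlling the resulting Volterra-type operator $A_n(\en)$ by its Hilbert--Schmidt norm --- is the paper's. But the specific way you combine the two identities has a genuine gap. Substituting \emph{both} expansions into the orthogonality relation produces cross terms such as $(\enn-\en)\langle A_n(\en)\p_\cdot(\enn),\p_\cdot(\en)\rangle$ and a second-order term $\langle A_n(\en)\p_\cdot(\enn),A_n(\en)\p_\cdot(\ennn)\rangle$; after Cauchy--Schwarz these are controlled by the norms of the \emph{perturbed} vectors $\|\p_\cdot(\enn)\|_n$ and $\|\p_\cdot(\ennn)\|_n$, not by $\sqrt{K_n(\en,\en)}=\|\p_\cdot(\en)\|_n$, so your claimed intermediate inequality with a single factor of $\sqrt{K_n(\en,\en)}$ does not follow. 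Because the identity in Lemma \ref{lem-variation-of-params} is implicit, the only a priori comparison available is $\|\p_\cdot(\enn)\|_n\le\|\p_\cdot(\en)\|_n+|\enn-\en|\,\|A_n(\en)\|\,\|\p_\cdot(\enn)\|_n$, which is useful only when $|\enn-\en|\,\|A_n(\en)\|<1$ --- essentially the statement being proved. Your proposed rank-one fix does not address this: the perturbed values $\p_m(\enn)$ enter through the vector that $A_n(\en)$ acts on, not through its kernel, so no restructuring of the operator removes them. (Also, linearity in $\sum_{k=0}^{n-1}\|\Tr{k}\|^2$ is not the bottleneck you fear: each entry of $A_n(\en)$ is a difference of products of two solution values, so $\|A_n(\en)\|_{\mathrm{HS}}^2$ is bounded by a product of two such sums without any extra input.)

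The paper sidesteps the bootstrap by a different pairing. Order the two zeros so that $\|\p_\cdot(\ennn)\|_n\le\|\p_\cdot(\enn)\|_n$, and compare the orthogonality relation $0=\langle\p_\cdot(\ennn),\p_\cdot(\enn)\rangle$ with the norm identity $\|\p_\cdot(\enn)\|_n^2=\langle\p_\cdot(\enn),\p_\cdot(\enn)\rangle$, expanding only the \emph{first} argument of each inner product around $\en$. Upon subtracting, the common term $\langle\p_\cdot(\en),\p_\cdot(\enn)\rangle$ cancels, and each surviving term is at most $\|A_n(\en)\|\,\|\p_\cdot(\enn)\|_n^2$ times $|\enn-\en|$ or $|\ennn-\en|$; dividing by $\|\p_\cdot(\enn)\|_n^2$ gives $\|A_n(\en)\|^{-1}\le|\enn-\en|+|\ennn-\en|\le|\thn{0}-\thn{-1}|$, with no reference to $K_n(\en,\en)$ or to the normalization $K_n(\en,\en)\ge 1$ at all. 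A dichotomy on whether $|\enn-\en|\,\|A_n(\en)\|<\tfrac12$ could salvage your route, but only with a degraded constant, whereas the theorem is stated with constant one; you should rework the middle step along the lines above.
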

	\begin{proof}
We imitate the proof of \cite[Theorem 2.2]{last08}. Let $\seki{\p_k}$
		be the sequence of normalized OPUC.
		So
		$$
		\seki{\p_k(\en)},
		\seki{\p_k(\enn)},
		\seki{\p_k(\ennn)}
		$$
		solve \eqref{eq-OPUC-Tr} for parameters
		$\en, \enn, \ennn$ respectively.
		Thus, by Lemma \ref{lem-variation-of-params},
		\begin{align*}
			\p_k(\enn)
			&=
			\p_k(\en) +
			(\enn - \en)
			\sss{m=0}{k-1}
			\frac{1}{2 \en^{m+1}}
			\left(
			\p_m^\dg(\en) \psi_k(\en) -
			\p_k(\en) \psi_m^\dg(\en)
			\right)
			\p_m(\enn), \\
			\p_k(\ennn)
			&=
			\p_k(\en) +
			(\ennn - \en)
			\sss{m=0}{k-1}
			\frac{1}{2 \en^{m+1}}
			\left(
			\p_m^\dg(\en) \psi_k(\en) -
			\p_k(\en) \psi_m^\dg(\en)
			\right)
			\p_m(\ennn).
		\end{align*}
		Define the operator $A_n(\en) : \C^n \to \C^n$ by
		\begin{equation*}
			(A_n(\en) v)_k
			=
			\sss{m=0}{k-1}
			\frac{1}{2 \en^{m+1}}
			\left(
			\p_m^\dg(\en) \psi_k(\en) -
			\p_k(\en) \psi_m^\dg(\en)
			\right)
			v_m.
		\end{equation*}
		Thinking now of
		$\p_\cdot(\en),\p_\cdot(\enn),\p_\cdot(\ennn)$
		as vectors in $\C^n$, i.e.
		\begin{equation*}
			\p_\cdot(\xi)
			=
			\begin{pmatrix}
				\p_0(\xi) \\
				\vdots \\
				\p_{n-1}(\xi)
			\end{pmatrix},\quad
			\text{for } \xi=\en,\enn,\ennn,
		\end{equation*}
		we may write
		\begin{align*}
			\p_\cdot(\enn)
			&=
			\p_\cdot(\en)
			+
			(\enn - \en)
			A_n(\en)\p_\cdot(\enn) \\
			\p_\cdot(\ennn)
			&=
			\p_\cdot(\en)
			+
			(\ennn - \en)
			A_n(\en)\p_\cdot(\ennn). \\
		\end{align*}
		Let $||\cdot||_n$ denote the Euclidean norm in $\C^n$.
		Without loss of generality, assume
		$||\p_\cdot(\ennn)||_n
		\leq
		||\p_\cdot(\enn)||_n$.
		Otherwise, switch the roles of $\enn,\ennn$
		in the argument below.

		On the one hand,
		\begin{equation*}
			||\p_\cdot(\enn)||_n^2
			=
			\left<
			\p_\cdot(\enn),
			\p_\cdot(\enn)
			\right>
			=
			\left<
			\p_\cdot(\en),
			\p_\cdot(\enn)
			\right>
			+
			(\enn - \en)
			\left<
			A_n(\en)\p_\cdot(\enn),
			\p_\cdot(\enn)
			\right>,
		\end{equation*}
		and on the other hand, by Lemma \ref{lem-evecs-orthogonality}
		\begin{equation*}
			0
			=
			\left<
			\p_\cdot(\ennn),
			\p_\cdot(\enn)
			\right>
			=
			\left<
			\p_\cdot(\en),
			\p_\cdot(\enn)
			\right>
			+
			(\ennn - \en)
			\left<
			A_n(\en)\p_\cdot(\ennn),
			\p_\cdot(\enn)
			\right>.
		\end{equation*}
		Subtracting these equations and taking the absolute value, we get
		\begin{align*}
			||\p_\cdot(\enn)||_n^2
			&\leq
			|\enn - \en|
			|\left<
			A_n(\en)\p_\cdot(\enn),
			\p_\cdot(\enn)
			\right>| \\
			&\quad
			+
			|\ennn - \en|
			|\left<
			A_n(\en)\p_\cdot(\ennn),
			\p_\cdot(\enn)
			\right>| \\
			&\leq
			\left(
			|\enn - \en| +
			|\ennn - \en|
			\right)
			||A_n(\en)|| \cdot
			||\p_\cdot(\enn)||_n^2 \\
			&\Downarrow \\
			||A_n(\en)||^{-1}
			&\leq
			|\enn - \en| +
			|\ennn - \en|.
		\end{align*}
		Because the distance between two points
		on a circle is smaller than the length of the
		arc connecting them, we see that
		\begin{align*}
			|\enn - \en| +
			|\ennn - \en|
			\leq
			\left|\thn{-1} - \Theta \right|
			+
			\left|\thn{0} - \Theta \right|
			&=
			\left|\thn{0} - \thn{-1} \right|.
		\end{align*}
		All that is left is to show that
		\begin{equation*}
			||A_n(z)||
			\leq
			\sss{k=0}{n-1}
			||\Tr{k}||^2.
		\end{equation*}
		Indeed, we obtain this inequality
		by estimating the Hilbert-Schmidt norm of
		$A_n(z)$, which is larger or equal to its operator norm.
		\begin{align*}
			||A_n(z)||^2_{\operatorname{HS}}
			&=
			\frac{1}{4}
			\sss{k=1}{n-1}
			\sss{m=0}{k-1}
			\left|
			\p_m^\dg(\en) \psi_k(\en) -
			\p_k(\en) \psi_m^\dg(\en)
			\right|^2 \\
			&\leq
			\frac{1}{4}
			\sss{k=0}{n-1}
			\sss{m=0}{n-1}
			\left(
			\left|
			\p_m^\dg(\en) \psi_k(\en)
			\right|^2
			+
			2 \left|
			\p_m^\dg(\en) \psi_k(\en)
			\p_k(\en) \psi_m^\dg(\en)
			\right|
			+
			\left|
			\p_k(\en) \psi_m^\dg(\en)
			\right|^2
			\right).
		\end{align*}
		Summing over each of the three terms separately, 
		one finds that the first term yields 
		$
			\frac{1}{4}
			||\p_\cdot^\dg(z)||_n^2
			||\psi_\cdot(z)||_n^2
		$
		and the last term yields
		$
			\frac{1}{4}
			||\p_\cdot(z)||_n^2
			||\psi_\cdot^\dg(z)||_n^2
		$. 
		As for the middle term, by Cauchy-Schwarz, 
		its sum is less than or equal to
		\begin{equation*}
			\frac{1}{2}
			||\p_\cdot^\dg(z)||_n
			||\psi_\cdot(z)||_n
			||\p_\cdot(z)||_n
			||\psi_\cdot^\dg(z)||_n
			\leq
			\frac{1}{4}
			\left(
			||\p_\cdot^\dg(z)||_n^2
			||\psi_\cdot^\dg(z)||_n^2
			+
			||\p_\cdot(z)||_n^2
			||\psi_\cdot(z)||_n^2
			\right).
		\end{equation*}
		Therefore,
		\begin{equation*}
			||A_n(z)||^2_{\operatorname{HS}}
			\leq
			\frac{1}{4}
			\Big(
			||\p_\cdot(z)||_n^2
			+
			||\p_\cdot^\dg(z)||_n^2
			\Big)
			\Big(
			||\psi_\cdot(z)||_n^2
			+
			||\psi_\cdot^\dg(z)||_n^2
			\Big).
		\end{equation*}
	Note that
	\begin{align*}
		\frac{1}{2}
		\left(||\p_\cdot(z)||_n^2
		+
		||\p_\cdot^\dg(z)||_n^2
		\right)
		&=
		\frac{1}{2}
		\sss{k=0}{n-1}
		\left(
		|\p_k(z)|^2
		+
		|\p_k^\dg(z)|^2
		\right) \\
		&=
		\frac{1}{2}
		\sss{k=0}{n-1}
		\left \| \Tr{k}
		\begin{pmatrix}
			1 \\
			1
		\end{pmatrix}
		\right \|^2 \\
		&\leq
		\sss{k=0}{n-1}
		||\Tr{k}||^2,
	\end{align*}
	and similarly for
	$||\psi_\cdot(z)||_n^2+||\psi_\cdot^\dg(z)||_n^2$,
	only by using
	$
	\Tr{k}
	\begin{pmatrix}
		1 \\
		-1
	\end{pmatrix}
	=
	\begin{pmatrix}
		\psi_k(z) \\
		\psi_k^\dg(z)
	\end{pmatrix}
	$.
	Plugging this back in,
	\begin{equation*}
		||A_n(z)||^2_{\operatorname{HS}}
		\leq
		\left(
		\sss{k=0}{n-1}
		||\Tr{k}||^2
		\right)^2.
	\end{equation*}
	Hence
	\begin{equation*}
		\left|\thn{0} - \thn{-1} \right|
		\geq
		||A_n(z)||^{-1}
		\geq
		||A_n(z)||^{-1}_{\operatorname{HS}}
		\geq
		\left(
		\sss{k=0}{n-1}
		||\Tr{k}||^2
		\right)^{-1},
	\end{equation*}
	which concludes the proof.

	\end{proof}
	\enskip

	\enskip
	\begin{proof}[Proof of Theorem \ref{thm:ContLowerBound}]

Fix a sequence $\{\beta_n \}_{n=0}^\infty$ with $|\beta_n|=1$. 		

\begin{enumerate}
 \item By \cite[Theorem 10.9.4]{simon09} an essential support for $\mu_{\text{ac}}$ is
		\begin{equation*}
			N_1
			=
			\left\{
			z \in \partial\D
			\thinspace \middle| \thinspace
			\underset{n\to\infty}
			{\liminf}
			\,
			\frac{1}{n}
			\sss{k=0}{n-1}
			||\Tr{k}||^2
			< \infty
			\right\}.
		\end{equation*}
		Notice that $z=\ei\Theta \in N_1$
		if and only if
		\begin{equation*}
			\underset{n\to\infty}
			{\limsup}
			\,
			n \left(
			\sss{k=0}{n-1}
			||\Tr{k}||^2
			\right)^{-1}
			> 0,
		\end{equation*}
		which implies,
		by Theorem \ref{thm-lower-bound},
		that
		\begin{equation*}
			\underset{n\to\infty}
			{\limsup}
			\,
			n (\thn{0}(\Theta) - \thn{-1}(\Theta))
			> 0.
		\end{equation*}
	\item By Theorem \ref{thm-lower-bound}, $z \in A$ implies
		\begin{equation*}
			\underset{n\to\infty}
			{\liminf}
			\,
			n^\gamma
			\left(
			\sss{k=0}{n-1}
			||T_k(z)||^2
			\right)^{-1}
			< \infty,
		\end{equation*}
		which is equivalent to
		\begin{equation*}
			\underset{n\to\infty}
			{\limsup}
			\,
			\frac{1}{n^\gamma}
			\sss{k=0}{n-1}
			||T_k(z)||^2
			> 0.
		\end{equation*}
		Now, by \eqref{eq-tr-pres},
 		\begin{align*}
			||T_k(z)||^2
			&\leq
			\frac{1}{4}
			\left(
			|\p_k(z) + \psi_k(z)|^2 +
			|\p_k(z) - \psi_k(z)|^2
			\right. \\
			&\quad
			+ \left.
			|\p_k^\dg(z) + \psi_k^\dg(z)|^2 +
			|\p_k^\dg(z) - \psi_k^\dg(z)|^2
			\right) \\
			&\leq
			|\p_k(z)|^2 +
			|\psi_k(z)|^2.
		\end{align*}
		The last step is due to the fact that 
		\begin{equation}\label{eq:NormEqualityOnCircle}
			\forall z \in \partial\D \quad
			|\varphi_k(z)|=
			|z^k \ol{\varphi_k(1/\ol{z})}|=
			|\varphi^*_k(z)|
		\end{equation}
		and as we noted earlier in this chapter,
		$\p_k^\dg, \psi_k^\dg$ are just $*$-conjugates
		of first- and second-kind orthogonal polynomials
		(up to a sign change).
		Therefore, 
		\begin{equation*}
			\underset{n\to\infty}
			{\limsup}
			\,
			\frac{1}{n^\gamma}
			\left(
			||\p_\cdot(z)||_n^2 +
			||\psi_\cdot(z)||_n^2
			\right)
			> 0.
		\end{equation*}
		Now, by \cite[Theorem 4.3.16]{simon09}
		for $\mu$-a.e.\ $z\in\partial\D$
		and for any $\eta>0$,
		there exists a constant $C_\eta$ such that
		\begin{equation*}
			||\p_\cdot(z)||_n
			\leq
			C_\eta
			\cdot
			n^{\frac{1}{2} + \eta},
		\end{equation*}
		which implies that
		\begin{equation*}
			\frac
			{||\p_\cdot(z)||_n^2}
			{n^\gamma}
			\leq
			C_\eta^2
			\cdot
			n^{1 - \gamma + 2\eta}
		\end{equation*}
		converges to zero as $n$ goes to infinity,
		by choosing $\eta$ small enough.
		
		It follows that
		\begin{equation*}
			\underset{n\to\infty}
			{\limsup}\,
			\frac
			{||\psi_\cdot(z)||_n^2}
			{n^\gamma}
			> 0,
		\end{equation*}
		and we conclude that for $\delta=\frac{1}{\gamma - \eps}$ (where $\gamma>\varepsilon>0$)
		\begin{align*}
			\underset{n\to\infty}
			{\liminf}\,
			\frac
			{||\p_\cdot(z)||_n}
			{||\psi_\cdot(z)||_n^\delta}
			&\leq
			\underset{n\to\infty}
			{\liminf}\,
			C n^{(1 - \delta \gamma) / 2 + \eta}
		\end{align*}
		for some constant $C$, 
		which again converges to zero as $n$ goes to infinity
		by choosing $\eta$ small enough.
		By the subordinacy theory for OPUC
		\cite[Theorems 10.8.5, 10.8.7]{simon09},
		$\mu(A\cap\cdot)$ is supported on a set
		of Hausdorff dimension at most
		$\frac{2 \delta}{1+ \delta} = \frac{2}{1+\gamma-\eps}$.
		Since $\eps>0$ is arbitrary,
		$\mu(A\cap\cdot)$ is supported on a set
		of Hausdorff dimension at most
		$\frac{2}{1+\gamma}$.
\end{enumerate}	
\end{proof}
\pagebreak[2] 
\begin{rem}
As remarked in the Introduction, the analogous statement for OPRL holds as well. The proof follows the same lines. In fact, since all the
relevant results already exist it is much shorter. Part 1 follows immediately by combining \cite[Theorem 2.2]{last08} with \cite[Theorem
1.1]{LastSimonAC}. Part 2 follows immediately from \cite[Theorem 2.2]{last08} and \cite[Corollary 4.2]{last99}.
\end{rem}


\section{Sparse Verblunsky Coefficients}

The following is the main technical tool behind Theorem \ref{thm:SingularClock}.

\begin{thm}\label{thm-main}
		Let $\sedecay$ be a sequence of numbers in the open unit disk $\D$ 
		such that $v_\ell \to 0$ as $\ell \to \infty$. 
		Let $\sesparse$ be a strictly increasing sequence of
		integers and let $\mu$ be the measure corresponding to the Verblunsky coefficients
		\begin{equation*}
			\an = \begin{cases}
				v_\el & \text{if } n=N_\el \\
				0 & \text{otherwise}.
			\end{cases}
		\end{equation*}
		Let $K_n$ be the associated CD kernel. If $\sesparse$ is sufficiently sparse (see the remark below) then $K_n$ admits sine kernel asymptotics, namely
		\begin{equation} \label{sinasydef}
			\frac
			{\K(
			\ei{(\theta + \frac{2 \pi a}{n})},
			\ei{(\theta + \frac{2 \pi b}{n})})}
			{\K(\eit, \eit)}
			\conv{n}{\infty}
			e^{i \pi (a-\ob)}
			\frac
			{sin (\pi (a-\ob))}
			{\pi (a-\ob)}
		\end{equation}
		uniformly for $\theta \in [0,2\pi)$ and for $a,b$ in compact subsets of the strip $\abrange$.
	\end{thm}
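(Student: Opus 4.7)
The plan is to exploit the block structure imposed by sparsity to reduce the sine kernel asymptotics of $\K(z,w)$ to an explicit geometric sum. First, I would analyze the Szegő recurrence \eqref{recrel} on each interval $[\nl+1,\nlp]$: there every $\alpha_k$ vanishes, so the recurrence collapses to $\p_{k+1}(z)=z\p_k(z)$ and $\p^*_{k+1}(z)=\p^*_k(z)$; hence $\p_{\nl+1+j}(z) = z^j\,\p_{\nl+1}(z)$ and $\p^*_{\nl+1+j}(z)=\p^*_{\nl+1}(z)$ for every $j=0,\dots,\nlp-\nl-1$. At each sparse index $\nl$ the nontrivial Szegő step with $\alpha_{\nl}=v_\el$ applies an $O(|v_\el|)$ perturbation of the block-diagonal transfer matrix; using \eqref{eq:NormEqualityOnCircle}, the modulus $|\p_k(z)|$ on $\partial\D$ is therefore constant across a single block, changing only at the sparse instants.

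Next, I would split $\K(z,w) = \sum_{k=0}^{n-1}\p_k(z)\overline{\p_k(w)}$ into a sum over full blocks plus a partial last block. By the block identity above, the contribution of a block of length $L$ starting at $\nl+1$ is the closed-form geometric sum
\begin{equation*}
\p_{\nl+1}(z)\,\overline{\p_{\nl+1}(w)} \cdot \frac{(z\ol{w})^{L}-1}{z\ol{w}-1}.
\end{equation*}
Plugging in $z = \ei{(\theta+2\pi a/n)}$ and $w = \ei{(\theta+2\pi b/n)}$ yields $z\ol{w} = \exp(2\pi i(a-\ob)/n)$ for real $a,b$. Writing $\el(n)$ for the block containing $n$ and $L_n = n - N_{\el(n)}$, the last-block quantity $\frac{1}{L_n}\sum_{j=0}^{L_n-1}(z\ol{w})^j$ is a Riemann sum which, once $L_n/n\to 1$, converges to $\int_0^1 \ei{(2\pi(a-\ob)s)}\,ds = \ei{\pi(a-\ob)}\sin(\pi(a-\ob))/(\pi(a-\ob))$, i.e.\ exactly the right-hand side of \eqref{sinasydef}.

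To complete the argument, I would invoke the sparsity hypothesis to ensure $N_{\el(n)}/n\to 0$. This forces the last block to dominate $\K(\eit,\eit)$ in the denominator: the earlier full blocks contribute a total of at most $N_{\el(n)}\cdot \max_{\ell'\le\el(n)}|\p_{N_{\ell'}+1}(\eit)|^2$ on the diagonal, while the last block contributes $(n-N_{\el(n)})\,|\p_{N_{\el(n)}+1}(\eit)|^2 \sim n\,|\p_{N_{\el(n)}+1}(\eit)|^2$. Combined with the fact that $z,w$ lie within $O(1/n)$ of $\eit$ and $|v_{\el(n)}|\to 0$, the prefactor ratio $\p_{N_{\el(n)}+1}(z)\,\overline{\p_{N_{\el(n)}+1}(w)}/|\p_{N_{\el(n)}+1}(\eit)|^2$ converges to $1$ uniformly in $\theta$, so it cancels in the limit and one is left with the pure geometric-sum computation.

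The main obstacle is the quantitative choice of "sufficiently sparse" and the extension from real $a,b$ to the complex strip $\abrange$. Off the circle, the moduli $|\p_k(z)|$ grow exponentially in $|\Im a|L/n$ across a block and the earlier blocks' off-diagonal contributions (bounded by $|\p|^2\cdot n/|a-b|$ per block rather than $|\p|^2\cdot L$) no longer cancel as cleanly against $\K(\eit,\eit)$. The natural remedy is to establish \eqref{sinasydef} first for real $a,b$ by the geometric-sum argument above, and then extend to the strip using analyticity of the ratio as a function of $(a,b)$ together with a polynomial-in-$(a,b)$ bound, via Vitali's theorem. Choosing $\nlp/\nl$ to grow fast enough and $|v_\el|\to 0$ provides both the earlier-block negligibility on the diagonal and the locally uniform analytic bounds required for this extension to succeed.
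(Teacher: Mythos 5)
There is a genuine gap in the central step of your argument. Your mechanism for identifying the limit is that ``the last block dominates,'' which you justify by claiming that sparsity forces $N_{\el(n)}/n\to 0$ and hence $L_n/n\to 1$. No choice of the sparse sequence can make this true for \emph{all} $n$: for $n$ just beyond a sparse index, say $n=N_{\el+1}+1$, the last block has length $L_n=O(1)$ while $N_{\el(n)}/n=N_{\el+1}/(N_{\el+1}+1)$ is close to $1$. Along such $n$ the partial last block carries essentially none of the weight of $\K(\eit,\eit)$; it is the \emph{second-to-last} block (of length $N_{\el+1}-N_\el\approx n$) that dominates, and it comes with a different prefactor $\pl{\nl+1}(z)\ol{\pl{\nl+1}(w)}$ and with exponents of $z\ol{w}$ that restart at $0$ rather than continuing those of the last block. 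Your Riemann-sum computation therefore does not apply on this subsequence, and the theorem asserts convergence of the full sequence. The repair requires a perturbative comparison: since $\plp{\nlp+1}(z)=\rho_{\nlp}^{-1}\bigl(z^{\nlp-\nl}\pl{\nl+1}(z)-\ol{\vlp}\pls{\nl+1}(z)\bigr)$, the most recent perturbation changes the kernel only by $O(|\vlp|)$ relative terms, so for $\nlp<n\leq N_{\el+2}$ one compares $\K$ with the kernel $\Kl$ of the measure carrying one fewer perturbation, for which $n\gg\nl$ genuinely holds. This is exactly how the paper proceeds (an induction over finitely-perturbed measures, with the error controlled by $|\vlp|\to 0$), and it is where the hypothesis $v_\el\to 0$ does its real work; in your write-up that hypothesis is used only to normalize the prefactor ratio, which does not address the problem.

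A secondary, smaller point: for $a,b$ in the complex strip the paper avoids any analytic-continuation argument by working directly with the uniform bound $|z_n^m|,|w_n^m|<e^\pi$ (Lemma \ref{lem-power-bound}), so all estimates hold verbatim for complex $a,b$. Your proposed Vitali-theorem route is not wrong in principle, but it would give locally uniform convergence in $(a,b)$ only for each fixed $\theta$; you would still need uniform-in-$\theta$ bounds to recover the uniformity claimed in \eqref{sinasydef}, so the direct estimate is both simpler and stronger here.
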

	\begin{rem}
		By $\sesparse$ being sufficiently sparse we mean that for every $\el\in\N$ there exists an integer $\hatn$ (which depends on
$N_1,...,N_\el$ and on the sequence $\sedecay$), such that $N_{\el+1}$ is at least larger than $\hatn$.
	\end{rem}
	
That \eqref{sinasydef} implies clock behavior follows basically from \eqref{eq-para-cd} and is known as the Freud-Levin-Lubinsky Theorem in
the OPRL setting \cite{freud71,levin08,simon08}. The OPUC analog is presented below in Theorem \ref{thm-clock}. In addition,
\cite[Theorem~12.5.2]{simon09} says that if $\sedecay$ converges to zero and in addition
	$$
	 	\underset{\el \to \infty}{\lim}
	 	\frac{N_{\el+1}}{N_\el} = \infty
	 	\quad \text{and} \quad
	 	\sss{\el=0}{\infty} |v_\el|^2 = \infty,
	$$
	then the measure described in Theorem \ref{thm-main} is purely singular continuous. Thus, Theorem \ref{thm:SingularClock} follows from
this discussion and Theorem \ref{thm-main} above. Accordingly, the rest of this section and Section 4 is devoted to proving Theorem
\ref{thm-main}.

The simplest measure on $\partial \D$ for which sine kernel asymptotics hold is the normalized Lebesgue measure, which corresponds to the Verblunsky coefficients $\an \equiv 0$.
While this follows of course from \cite{lubinsky07}, it is also a direct computation that we
present in Subsection \ref{sec_leb}. The bulk of the proof lies in showing that for any $\sedecay$ it is possible to choose the sequence
$N_\ell$ in such a way that the asymptotics of $K_n$ remain unchanged under a sparse decaying perturbation.
	
Below, $\mu^\Leb := \frac{d\theta}{2\pi}$ denotes the normalized Lebesgue measure on the unit circle, $\mu$ denotes the
perturbed measure appearing in Theorem \ref{thm-main}, and $\ml$ denotes the finitely-perturbed measure corresponding to the Verblunsky
coefficients
	\[
		\an =
		\begin{cases}
			v_j & n=N_j \text{, } j \leq \el \\
			0 	& \text{otherwise}
		\end{cases}.
	\]
	Let $K_n^\Leb,\K,\Kl$ denote the CD kernels of $\mu^\Leb,\mu,\ml$ respectively, and let $\p,\pl{}$ denote the normalized orthogonal
polynomials of $\mu,\ml$ respectively.
	Furthermore, to shorten the formulation of \eqref{sinasydef}, denote
	\begin{align*}
		z_n &:= \ei{(\theta + \frac{2 \pi a}{n})}, \\
		w_n &:= \ei{(\theta + \frac{2 \pi b}{n})}
	\end{align*}
	where $\theta \in [0,2\pi)$ and $a,b \in \abrange$.

We first prove a simple uniform bound on powers of $z_n$ and $w_n$.
	\begin{lem}\label{lem-power-bound}
		For every two integers $m < n$,
		\begin{align*}
			|z_n^m|, |w_n^m| < e^\pi.
		\end{align*}
	\end{lem}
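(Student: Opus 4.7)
The plan is a direct computation separating real and imaginary parts of $a$ (and $b$). Write $a = a_1 + i a_2$ with $a_1 \in \R$ and $|a_2| < \tfrac{1}{2}$, since $a$ lies in the strip $\abrange$. Then
\begin{equation*}
z_n^m = e^{im(\theta + 2\pi a/n)} = e^{im\theta}\cdot e^{2\pi i m a_1/n}\cdot e^{-2\pi m a_2/n}.
\end{equation*}
The first two factors have modulus $1$, so $|z_n^m| = e^{-2\pi m a_2/n}$.

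Next I would use the hypothesis $m < n$, which forces $m/n < 1$, together with $|a_2| < \tfrac{1}{2}$, to estimate
\begin{equation*}
\left| -\tfrac{2\pi m a_2}{n} \right| = \tfrac{2\pi m}{n}\,|a_2| < 2\pi \cdot \tfrac{1}{2} = \pi,
\end{equation*}
so $|z_n^m| < e^{\pi}$. The same argument with $b$ in place of $a$ gives $|w_n^m| < e^{\pi}$.

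There is no real obstacle here; the lemma is just a convenient packaging of the elementary observation that the imaginary part of $a$ and $b$ is bounded by $\tfrac{1}{2}$ so that raising to a power at most $n-1$ produces an exponent bounded (in absolute value) by $\pi$. The slightly delicate point is only that the inequality $m < n$ is strict, so that the bound is a strict inequality; the argument uses this automatically via $m/n < 1$.
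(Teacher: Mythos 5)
Your proof is correct and is essentially the paper's own argument: compute $|z_n^m| = e^{-2\pi m\,\mathrm{Im}(a)/n}$ and bound the exponent by $\pi$ using $m/n < 1$ together with the strip condition on $\mathrm{Im}(a)$. Note only that, like the paper's proof, your estimate $\frac{2\pi m}{n}|a_2| < \pi$ tacitly assumes $m \geq 0$ (for negative $m$ of large modulus the bound fails), which is consistent with how the lemma is applied but not with the literal statement ``for every two integers $m<n$.''
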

	\begin{proof}
		By the monotonicity of the real exponent, together with $-\imaginary{a} < \frac{1}{2}$, we get
		\begin{align*}
			|z_n^m|
			&=
			\left|e^{im(\theta + \frac{2 \pi a}{n})}\right| \\
			&=
			e^{-\frac{2 \pi m}{n} \imaginary{a}} \\
			&<
			e^\pi.
		\end{align*}
		The same can be shown for $w_n$ using the fact that $-\imaginary{b} < \frac{1}{2}$.
	\end{proof}
	\enskip
	
	\enskip
	Moreover, \eqref{eq:NormEqualityOnCircle} implies another useful inequality 
	for every $z \in \partial\D$ 
	(see \cite[(1.5.27)]{simon09}): 
	\begin{equation}\label{normineq}
	\begin{split}
		z \p_n &= \rho_n \p_{n+1} + \ol{\an} \p_n^* \\
		& \Downarrow \\
		|\p_n| &\leq \frac{|\rho_n|}{1-|\an|} |\p_{n+1}| \\
		& =
		\sqrt {\frac
		{1+|\an|}
		{1-|\an|}}
		|\p_{n+1}|.
	\end{split}
	\end{equation}

\subsection{Asymptotics of the CD kernel for the Lebesgue measure}\label{sec_leb}
	The orthogonal polynomials of the normalized Lebesgue measure $\mu^\Leb$ are $\{z^k\}_{k=0}^\infty$. Therefore
	$$
		K_n^\Leb (z_n, w_n) =
		\sss{k=0}{n-1}z_n^k \ol{w_n^k} =
		\sss{k=0}{n-1}
		e^{2 \pi i k \frac{a-\ob}{n}},
	$$
	and similarly
	$$
		K_n^\Leb (\eit, \eit) =
		\sss{k=0}{n-1}
		e^{ik\theta}\ol{e^{ik\theta}}
		= n.
	$$
	We now calculate the asymptotics of the kernel. Using the formula for the sum of a geometric sequence
	\begin{align*}
		\frac
		{K_n^\Leb(z_n,w_n)}
		{K_n^\Leb(\eit,\eit)}
		& =
		\frac{1}{n}
		\sss{k=0}{n-1}
		e^{2 \pi i k \frac{a-\ob}{n}} \\
		& =
		\frac{1}{n}
		\frac
		{1-e^{2 \pi i (a-\ob)}}
		{1-e^{2 \pi i \frac{a-\ob}{n}}}
		\\
		&=
		e^{i \pi (a-\ob)}
		(e^{-i \pi (a-\ob)} - e^{i \pi (a-\ob)})
		\frac{\frac{1}{n}}
		{1-e^{2 \pi i \frac{a-\ob}{n}}} \\
		& \conv{n}{\infty}
		e^{i \pi (a-\ob)}
		\frac{sin(\pi(a-\ob))}
		{\pi (a-\ob)}.
	\end{align*}

\subsection{Asymptotics of the CD kernel for finitely-perturbed measures}\label{sec_finite_pert}
	In order to prove that the CD kernel of a finitely-perturbed measure $\ml$ has sine kernel asymptotics, we show that it is asymptotically
equivalent to the CD kernel of the Lebesgue measure.
	Since the OPUC do not vanish on the unit circle $\parD$, we may write
	\begin{equation}\label{fin_eq1}
		\frac{\Kl(z_n,w_n)}{\Kl(\eit,\eit)}
		=
		\frac{\Kl(z_n,w_n)}{n \plnleit^2}
		\cdot
		\frac{n \plnleit^2}{\Kl(\eit,\eit)}
	\end{equation}
	while also
	\begin{equation}\label{fin_eq2}
		\frac{n \plnleit^2}{\Kl(\eit,\eit)}
		=
		\frac
		{\sss{k=0}{n-1}\plnleit^2}
		{\sss{k=0}{n-1}\plkeit^2}
		\conv{n}{\infty} 1
	\end{equation}
	because $\pl{k}=\pl{\nl+1}$ eventually (i.e. for every $k \geq \nl+1$).
	So by
\eqref{fin_eq1} and \eqref{fin_eq2}, it suffices to prove that
	\begin{equation}\label{fin_goal}
		\left|
		\frac{\Kl(z_n,w_n)}{n \plnleit^2}
		-
		\frac{K_n^\Leb(z_n,w_n)}{K_n^\Leb(\eit,\eit)}
		\right|
		\conv{n}{\infty} 0.
	\end{equation}
	Indeed,
	\begin{align*}
		\left|
		\frac{\Kl(z_n,w_n)}{n \plnleit^2}
		-
		\frac{K_n^\Leb(z_n,w_n)}{K_n^\Leb(\eit,\eit)}
		\right|
		& =
		\frac{1}{n}
		\left|
		\frac
		{\sss{k=0}{n-1}\pl{k}(z_n)\ol{\pl{k}(w_n)}}
		{\plnleit^2}
		-
		\sss{k=0}{n-1}z_n^k \ol{w_n^k}
		\right| \\
		& \leq
		\frac{1}{n}
		\sss{k=0}{n-1}
		\left|
		\frac
		{\pl{k}(z_n)\ol{\pl{k}(w_n)}}
		{\plnleit^2}
		- z_n^k \ol{w_n^k}
		\right|,
	\end{align*}
	which we partition into two sums
	\begin{equation*}
		\frac{1}{n}
		\sss{k=0}{\nl}
		\left|
		\frac
		{\pl{k}(z_n)\ol{\pl{k}(w_n)}}
		{\plnleit^2}
		- z_n^k \ol{w_n^k}
		\right| \\
		+
		\frac{1}{n}
		\sss{k=\nl+1}{n-1}
		\left|
		\frac
		{\pl{k}(z_n)\ol{\pl{k}(w_n)}}
		{\plnleit^2}
		- z_n^k \ol{w_n^k}
		\right|.
	\end{equation*}
	Since for constant $k$, the term
	$
		\left|
		\frac
		{\pl{k}(z_n)\ol{\pl{k}(w_n)}}
		{\plnleit^2}
		- z_n^k \ol{w_n^k}
		\right|
	$ converges as $n\to\infty$, it is in particular a bounded sequence in $n$. Therefore, as for the finite sum,
	$$
		\frac{1}{n}
		\sss{k=0}{\nl}
		\left|
		\frac
		{\pl{k}(z_n)\ol{\pl{k}(w_n)}}
		{\plnleit^2}
		- z_n^k \ol{w_n^k}
		\right|
		\conv{n}{\infty} 0.
	$$
	To take care of the second sum, note that for every $k>\nl$ and every $z\in\C$
	\[
		\pl{k}(z)=z^{k-\nl-1}\pl{\nl+1}(z),
	\]
	so
	\[
		\left|
		\frac
		{\pl{k}(z_n)\ol{\pl{k}(w_n)}}
		{\plnleit^2}
		- z_n^k \ol{w_n^k}
		\right|
		=
		\left|
		z_n^k
		w_n^k
		\right|
		\cdot
		\left|
		\frac
		{(z_n \ol{w_n})^{-\nl-1}\pl{\nl+1}(z_n)
		\ol{\pl{\nl+1}(w_n)}}
		{\plnleit^2}
		- 1
		\right|.
	\]
	We conclude, using Lemma \ref{lem-power-bound}, that
	\begin{align*}
		\frac{1}{n}
		\sss{k=\nl+1}{n-1}
		\left|
		\frac
		{\pl{k}(z_n)\ol{\pl{k}(w_n)}}
		{\plnleit^2}
		- z_n^k \ol{w_n^k}
		\right|
		& \leq
		\frac{e^{2\pi}}{n}
		\sss{k=\nl+1}{n-1}
		\left|
		\frac
		{(z_n \ol{w_n})^{-\nl-1}\pl{\nl+1}(z_n)
		\ol{\pl{\nl+1}(w_n)}}
		{\plnleit^2}
		- 1
		\right| \\
		& \leq
		e^{2\pi}
		\left|
		\frac
		{(z_n \ol{w_n})^{-\nl-1}\pl{\nl+1}(z_n)
		\ol{\pl{\nl+1}(w_n)}}
		{\plnleit^2}
		- 1
		\right| \\
	\end{align*}
	which converges, by continuity, to
	$$
		e^{2\pi} \left|
		\frac
		{(\eit \ol{\eit})^{(-\nl-1)}\pl{\nl+1}(\eit)
		\ol{\pl{\nl+1}(\eit)}}
		{\plnleit^2}
		-1
		\right|
		= 0
	$$
	as $n\to\infty$, thus proving \eqref{fin_goal}.


\section{Proof of Theorem \ref{thm-main}}

We begin by recursively constructing the Verblunsky coefficients of the fully-perturbed measure. Assume that $\{N_j\}_{j=0}^\el$ are
already chosen. We now pick an integer $\hatn$ large enough so that the following conditions are met:
	\begin{enumerate}\label{conditions}
		\item \label{cond_asy}
		$
			\left|
			\frac
			{\Kl (z_n,w_n)}
			{\Kl (\eit, \eit)}
			-
			e^{i \pi (a-\ob)}
			\frac
			{sin (\pi (a-\ob))}
			{\pi (a-\ob)}
			\right|
			\leq
			\frac{1}{\el}
		$
		for every $n \geq \hatn$, which can be guaranteed by Section \ref{sec_finite_pert}.
		\item \label{cond_cont}
		$
			\frac
			{\left|\pl{\nl+1}(z_n)\right|}
			{\plnleit},
			\frac
			{\left|\pl{\nl+1}(w_n)\right|}
			{\plnleit},
			\frac
			{\left|\pls{\nl+1}(z_n)\right|}
			{\plnleit},
			\frac
			{\left|\pls{\nl+1}(w_n)\right|}
			{\plnleit}
			< 2
		$
		for every $n \geq \hatn$, which can be guaranteed by continuity, and the fact that on the unit circle
		$\left|\pls{\nl+1}(\eit)\right|=\plnleit$.
	\end{enumerate}
	We are now free to pick $\nlp$ as long as it is larger than $\hatn$. Exactly in this sense we mean that the sequence $\sesparse$ in
Theorem \ref{thm-main} should be sufficiently sparse. 
	Our goal is now to prove that the limit
	\begin{equation*}\label{goalgrand}
		\left|
		\frac
		{\K (z_n,w_n)}
		{\K (\eit, \eit)}
		-
		e^{i \pi (a-\ob)}
		\frac
		{sin (\pi (a-\ob))}
		{\pi (a-\ob)}
		\right|
		\conv{n}{\infty} 0
	\end{equation*}
	holds uniformly for $\theta \in [0,2\pi)$ and for $a,b$ in compact subsets of the strip $\abrange$.
	We claim that it suffices to show that
	\begin{equation}\label{goal1}
		\underset{\nlp < n \leq N_{\el+2}}{max}
		\left|
		\frac
		{\K (z_n,w_n)}
		{\K (\eit, \eit)}
		-
		\frac
		{\Kl (z_n, w_n)}
		{\Kl (\eit, \eit)}
		\right|
		\conv{\el}{\infty} 0
	\end{equation}
	uniformly for $\theta \in [0,2\pi)$ and for $a,b$ in compact subsets of the strip $\abrange$. Indeed, let $\eps>0$, and assume
\eqref{goal1} holds. Then there exists $L\in\N$ such that for every $\el > L$,
	\begin{equation*}
		\underset{\nlp < n \leq N_{\el+2}}{max}
		\left|
		\frac
		{\K (z_n,w_n)}
		{\K (\eit, \eit)}
		-
		\frac
		{\Kl (z_n, w_n)}
		{\Kl (\eit, \eit)}
		\right|
		< \frac{\eps}{2}.
	\end{equation*}
	We may assume that $L>\frac{2}{\eps}$, otherwise we just increase $L$ as needed. For every $n>N_{L+1}$, let $\widetilde\el$ be the integer
such that
	$N_{\widetilde\el+1} < n \leq N_{\widetilde\el+2}$. Now
	\begin{align*}
		\left|
		\frac
		{\K (z_n,w_n)}
		{\K (\eit, \eit)}
		-
		e^{i \pi (a-\ob)}
		\frac
		{sin (\pi (a-\ob))}
		{\pi (a-\ob)}
		\right|
		& \leq
		\left|
		\frac
		{\K (z_n,w_n)}
		{\K (\eit, \eit)}
		-
		\frac
		{K_n^{(\widetilde\el)} (z_n,w_n)}
		{K_n^{(\widetilde\el)} (\eit, \eit)}
		\right| \\
		& \quad +
		\left|
		\frac
		{K_n^{(\widetilde\el)} (z_n,w_n)}
		{K_n^{(\widetilde\el)} (\eit, \eit)}
		-
		e^{i \pi (a-\ob)}
		\frac
		{sin (\pi (a-\ob))}
		{\pi (a-\ob)}
		\right| \\
		& \leq
		\frac{\eps}{2} + \frac{1}{\widetilde\el} \\
		& < \eps.
	\end{align*}
	So Theorem \ref{thm-main} follows from \eqref{goal1}. Moreover, because $\nlp < n \leq N_{\el+2}$ implies $\K = \Klp$, \eqref{goal1} is
equivalent to
	\begin{equation}\label{goal2}
		\underset{\nlp < n \leq N_{\el+2}}{max}
		\left|
		\frac
		{\Klp (z_n,w_n)}
		{\Klp (\eit, \eit)}
		-
		\frac
		{\Kl (z_n, w_n)}
		{\Kl (\eit, \eit)}
		\right|
		\conv{\el}{\infty} 0.
	\end{equation}
	We shall now prove that \eqref{goal2} holds uniformly for $\theta \in [0,2\pi)$ and for $a,b$ in compact subsets of the strip $\abrange$.
Notice that
	\begin{align*}\label{summation}
		\left|
		\frac
		{\Klp (z_n,w_n)}
		{\Klp (\eit, \eit)}
		-
		\frac
		{\Kl (z_n, w_n)}
		{\Kl (\eit, \eit)}
		\right|
		& \leq
		\left|
		\frac
		{\Klp (z_n,w_n)}
		{\Klp (\eit, \eit)}
		-
		\frac
		{\Kl (z_n, w_n)}
		{\Klp (\eit, \eit)}
		\right| \\
		& \quad +
		\left|
		\frac
		{\Kl (z_n,w_n)}
		{\Klp (\eit, \eit)}
		-
		\frac
		{\Kl (z_n, w_n)}
		{\Kl (\eit, \eit)}
		\right| \\
		& =
		\left|
		\frac
		{\Klp(z_n, w_n) -
		\Kl(z_n, w_n)}
		{\Klp(\eit, \eit)}
		\right| \\
		& \quad +
		\left|
		\frac{\Kl(z_n,w_n)}
		{\Kl(\eit,\eit)}
		\right|
		\cdot \left|
		\frac
		{\Kl(\eit,\eit)-\Klp(\eit,\eit)}
		{\Klp(\eit, \eit)}
		\right|.
	\end{align*}
	So we can deal with each summand on its own.

\subsection{First summand}\label{sec_fir}
	For every $\nlp < n \leq N_{\el+2}$, we would like to estimate
	\begin{equation*}\label{def_A}
		A_{n,\el} :=
		\left|
		\frac
		{\Klp(z_n, w_n) -
		\Kl(z_n, w_n)}
		{\Klp(\eit, \eit)}
		\right|,
	\end{equation*}
	and show that it converges to zero as $\el\to\infty$.

	Our approach offers a technical simplification to the one found in \cite{breuer11}. There, the analogue of $A_{n,\el}$ was estimated using
the CD formula. Since the CD formula only holds outside of the diagonal ($z \neq w$ when both are real), special care had to be taken
for the denominator of $A_{n,\el}$, as well as the numerator in the case $a=b$. To solve that, a subtle argument for analyticity and Cauchy's
integral formula were used. We found that it is possible to estimate $A_{n,\el}$ directly without invoking the CD formula at all, thus slightly
simplifying the argument. While we only show it here for the OPUC case, our adjustments also work for the OPRL case of \cite{breuer11}.

	Since $\pl{k}=\plp{k}$ for every $k \leq \nlp$, we find that
	\begin{align*}
		A_{n,\el} &= \frac
		{ \left|
		\sss{k=\nlp+1}{n-1}
		\left(
		\plp{k}(z_n) \ol{\plp{k}(w_n)}
		-
		\pl{k}(z_n) \ol{\pl{k}(w_n)}
		\right) \right|}
		{\sss{k=0}{n-1}\plpkeit^2} \\
		& \leq \frac
		{\sss{k=\nlp+1}{n-1}
		\left|
		\plp{k}(z_n) \ol{\plp{k}(w_n)}
		-
		\pl{k}(z_n) \ol{\pl{k}(w_n)}
		\right|}
		{\sss{k=0}{n-1}\plpkeit^2}.
	\end{align*}
	Let us focus on a single term in the numerator, denote
	$$
		A_{n,\el,k}
		:=
		\left|
		\plp{k}(z_n) \ol{\plp{k}(w_n)}
		-
		\pl{k}(z_n) \ol{\pl{k}(w_n)}
		\right|
	$$
	for $\nlp<k<n \leq N_{\el+2}$. From the recursion relation \eqref{recrel} we derive the following at any point $z\in\C$:
	\begin{equation}
	\begin{split}\label{eq-estimate}
		\plp{k}(z) &=
		z^{k-\nlp-1} \plp{\nlp+1}(z) \\
		&=
		z^{k-\nlp-1} \rho^{-1}_{\nlp}
		\left(
		z \plp{\nlp}(z)
		- \ol{\vlp} \plps{\nlp}(z)
		\right) \\
		&=
		z^{k-\nlp-1} \rho^{-1}_{\nlp}
		\left(
		z \pl{\nlp}(z)
		- \ol{\vlp} \pls{\nlp}(z)
		\right) \\
		&=
		\rho^{-1}_{\nlp}
		\left(
		\pl{k}(z)
		- z^{k-\nlp-1} \ol{\vlp} \pls{\nlp}(z)
		\right).
	\end{split}
	\end{equation}
	Therefore,
	\begin{align*}
		\plp{k}(z_n) \ol{\plp{k}(w_n)}
		& =
		\rho_\nlp^{-2} \left[
		\pl{k}(z_n) \ol{\pl{k}(w_n)}
		\right. \\
		& \quad -
		\vlp \pl{k}(z_n)
		\ol{w_n^{k-\nlp-1}
		\pls{\nlp}(w_n)} \\
		& \quad -
		\ol{\vlp \pl{k}(w_n)}
		z_n^{k-\nlp-1}
		\pls{\nlp}(z_n) \\
		& \quad + \left.
		|\vlp|^2 (z_n \ol{w_n})^{k-\nlp-1}
		\pls{\nlp}(z_n)
		\ol{\pls{\nlp}(w_n)}
		\right].
	\end{align*}
	Plugging this into $A_{n,\el,k}$, we get
	\begin{align*}
		A_{n,\el,k}
		& \leq
		(\rho_\nlp^{-2}-1)
		\left|
		\pl{k}(z_n)
		\pl{k}(w_n)
		\right| \\
		& \quad +
		\rho_\nlp^{-2} |\vlp|
		\left(
		\left| \pl{k}(z_n)
		w_n^{k-\nlp-1}
		\pls{\nlp}(w_n)
		\right|
		+
		\left| \pl{k}(w_n)
		z_n^{k-\nlp-1}
		\pls{\nlp}(z_n)
		\right|
		\right) \\
		& \quad +
		\rho_\nlp^{-2} |\vlp|^2
		|z_n w_n|^{k-\nlp-1}
		\left|
		\pls{\nlp}(z_n)
		\pls{\nlp}(w_n)
		\right|.
	\end{align*}
	Since the polynomial sequence $\pl{j}$ is affected by the perturbation only up to $j=\nl+1$, afterwards we have the "free" recursion
formulas
	\begin{align*}
		j &\geq \nl + 1 \\
		&\Downarrow \\
		\pl{j}(z) &= z^{j-\nl-1} \pl{\nl+1}(z), \\
		\pls{j}(z) &= \pls{\nl+1}(z).
	\end{align*}
	We can use these formulas to regress all the $\pl{}$'s back to the last perturbed index $\nl+1$. 

	Applying Lemma \ref{lem-power-bound}, we
see that
	\begin{align*}
		A_{n,\el,k}
		& \leq
		(\rho_\nlp^{-2}-1)
		e^{2\pi}
		\left|
		\pl{\nl+1}(z_n)
		\pl{\nl+1}(w_n)
		\right| \\
		& \quad +
		\rho_\nlp^{-2} |\vlp|
		e^{2\pi}
		\left(
		\left| \pl{\nl+1}(z_n)
		\pls{\nl+1}(w_n)
		\right|
		+
		\left| \pl{\nl+1}(w_n)
		\pls{\nl+1}(z_n)
		\right|
		\right) \\
		& \quad +
		\rho_\nlp^{-2} |\vlp|^2
		e^{2\pi}
		\left|
		\pls{\nl+1}(z_n)
		\pls{\nl+1}(w_n)
		\right|.
	\end{align*}
	Now we may use Condition \ref{cond_cont} from the beginning of Section 4, so we write
	$$
		A_{n,\el,k}
		\leq
		\qlp
		\plnleit^2
	$$
	where
	$$
	\qlp:=
	e^{2\pi} \left(
	4 (\rho_\nlp^{-2}-1)
	+
	8 \rho_\nlp^{-2} |\vlp|
	+
	4 \rho_\nlp^{-2} |\vlp|^2
	\right).
	$$
	Note that $\qlp$ is independent of $n,k,a,b,\theta$, and converges to zero as $\el\to\infty$.
	By \eqref{normineq}, we conclude
	$$
		A_{n,\el,k} \leq
		\qlp \frac
		{1+|\vlp|^2}
		{1-|\vlp|^2}
		\left| \plp{\nlp+1}(\eit) \right|^2.
	$$
	Finally, we plug $A_{n,\el,k}$ back into $A_{n,\el}$
	\begin{align*}
		A_{n,\el}
		& \leq
		\qlp \frac
		{1+|\vlp|^2}
		{1-|\vlp|^2}
		\frac
		{\sss{k=\nlp+1}{n-1}
		\left| \plp{\nlp+1}(\eit) \right|^2}
		{\sss{k=0}{n-1}\plpkeit^2} \\
		& \leq \qlp \frac
		{1+|\vlp|^2}
		{1-|\vlp|^2}
		\conv{\el}{\infty} 0.
	\end{align*}

\subsection{Second summand}\label{sec_sec}
	The second summand is comprised of two parts. Clearly, $\left|
		\frac{\Kl(z_n,w_n)}
		{\Kl(\eit,\eit)}
		\right|$
	is bounded by the asymptotics of the finite perturbation (Condition \ref{cond_asy} from the beginning of Section 4). Namely,
	$
		\left|
		\frac{\Kl(z_n,w_n)}
		{\Kl(\eit,\eit)}
		\right|
		\leq
		\left|
		e^{i \pi (a-\ob)}
		\frac
		{sin (\pi (a-\ob))}
		{\pi (a-\ob)}
		\right|
		+ \frac{1}{\el}
	$.
	We are finally left with the last part
	\begin{equation*}
		\left| \frac
		{\Kl(\eit,\eit)-\Klp(\eit,\eit)}
		{\Klp(\eit, \eit)}
		\right|
	\end{equation*}
	for $\nlp < n \leq N_{\el+2}$, and we want to show that it converges to zero as $\el\to\infty$. That will conclude our proof. But this is a special
case of the first summand (Section \ref{sec_fir}), in which $a=b=0$. We are done.


\section{Appendix}

We prove here the following
	\begin{thm}\label{thm-clock}
		Let $\mu$ be a measure on $\parD$ 
		exhibiting sine kernel asymptotics in the sense of \eqref{sinasydef}, 
		uniformly for $\theta \in [0,2\pi)$ 
		and for $a,b$ in compact subsets of the strip $\abrange$. 
		Let $\se{H_n}$ be any corresponding sequence of paraorthogonal polynomials.
		Then for any $e^{i \Theta} \in \partial \D$ and any $j\in\Z$,
		\begin{equation*}
			n \left(\thn{j+1}(\Theta)-\thn{j}(\Theta) \right)
			\conv{n}{\infty}
			2 \pi.
		\end{equation*}
	\end{thm}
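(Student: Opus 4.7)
The strategy is to use Wong's identity \eqref{eq-para-cd} to translate the question about zeros of $H_n$ near $\Theta$ into a question about zeros of $\K(\cdot,z_0)$, and then to apply Hurwitz's theorem together with the uniform sine kernel asymptotics \eqref{sinasydef} to locate them.

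Fix $\Theta$ and write $\theta_n^*:=\thn{0}(\Theta)$, $z_n^*:=e^{i\theta_n^*}$. Since $z_n^*$ is a zero of $H_n$, Wong's identity gives
\[
H_n(z)=c_n(z-z_n^*)\K(z,z_n^*),
\]
so the remaining zeros of $H_n$ on $\partial\D$ are precisely the zeros of $\K(\cdot,z_n^*)$. Define
\[
F_n(a):=\frac{\K\bigl(e^{i(\theta_n^*+2\pi a/n)},z_n^*\bigr)}{\K(z_n^*,z_n^*)}.
\]
Because \eqref{sinasydef} holds \emph{uniformly} in $\theta\in[0,2\pi)$, we may substitute $\theta=\theta_n^*$ and $b=0$ to conclude that $F_n(a)\to e^{i\pi a}\sin(\pi a)/(\pi a)$ uniformly for $a$ in any compact subset of the strip $\abrange$. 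Note that $F_n(0)=1$, consistent with $z_n^*$ being a zero of $H_n$ but not of $\K(\cdot,z_n^*)$.

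The limit function is holomorphic with simple zeros exactly at $\Z\setminus\{0\}$. For each $J\in\N$, choose an open domain $U_J\subset\abrange$ that contains $[-J-\tfrac12,J+\tfrac12]$ and whose boundary avoids the nonzero integers. Hurwitz's theorem (in its counting form) then yields that for all $n$ large enough, $F_n$ has exactly $2J$ zeros in $U_J$, one clustering near each $j\in\{-J,\dots,J\}\setminus\{0\}$. Translating back, these are the zeros of $H_n$, other than $z_n^*$, lying on the arc $\{e^{i(\theta_n^*+2\pi a/n)}:a\in U_J\}$, and they are of the form $\theta_n^*+2\pi j/n+o(1/n)$ for $j\in\{-J,\dots,J\}\setminus\{0\}$.

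It remains to reconcile this enumeration with the labelling \eqref{def-zeros-labels}, which is the only nontrivial bookkeeping. The Hurwitz zero indexed by $j=-1$ lies at $\theta_n^*-2\pi/n+o(1/n)$; being the largest zero of $H_n$ strictly less than $\theta_n^*=\thn{0}(\Theta)$, it must coincide with $\thn{-1}(\Theta)$. Since $\thn{-1}(\Theta)<\Theta$ by \eqref{def-zeros-labels}, this forces $\theta_n^*-\Theta\le 2\pi/n+o(1/n)$, so $n(\theta_n^*-\Theta)$ is bounded. Inductively the Hurwitz zero at index $j$ matches $\thn{j}(\Theta)$, giving $\thn{j}(\Theta)=\theta_n^*+2\pi j/n+o(1/n)$ for every $j\in\Z$. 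Subtracting consecutive terms yields $n(\thn{j+1}(\Theta)-\thn{j}(\Theta))\to 2\pi$, as required.
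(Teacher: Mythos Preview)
Your proof is correct and follows essentially the same approach as the paper's: center the rescaled kernel at a zero of $H_n$ (you use $\thn{0}$, the paper uses $\thn{j}$), invoke the uniformity in $\theta$ of \eqref{sinasydef} to get convergence to $e^{i\pi x}\sin(\pi x)/(\pi x)$, and then apply Hurwitz to locate the nearby zeros. The only cosmetic difference is that you use Hurwitz in its counting form to pin down all $2J$ nearby zeros simultaneously and then match them to the labelling \eqref{def-zeros-labels}, whereas the paper treats each gap $\thn{j+1}-\thn{j}$ separately by showing $\liminf\ge1$ (via a contradiction with $f(L)\neq0$) and $\limsup\le1$ (via the smallest-positive-zero argument); the underlying mechanism is the same.
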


	\begin{proof}
		Since $\Theta$ is fixed in the proof, 
		we omit it from the notation $\thn{j}(\Theta)$ throughout.
		Define a sequence of functions
		\begin{equation*}
			f_n(x) = \frac
			{\K(
			\ei{(\thn{j}+\frac{2 \pi x}{n})}
			,\ei{\thn{j}})}
			{\K(\ei{\thn{j}}, \ei{\thn{j}})}.
		\end{equation*}
		By the sine kernel asymptotics, $f_n$ converges to
		\begin{equation*}
			f(x) = \ei{\pi x}
			\frac
			{sin(\pi x)}
			{\pi x}
		\end{equation*}
		uniformly on compact subsets of the strip
		$\{|\imaginary{x}<\frac{1}{2}|\}$.
		
		Let
		$
			a_n = \frac{n}{2\pi}
			\zerodiff
		$
		be the sequence which we want to show converges to 1.
		Suppose, for the sake of contradiction, that
		\begin{equation*}
			\underset{n\to\infty}{\liminf}
			\enskip a_n
			< 1,
		\end{equation*}
		so there exists a subsequence $\{a_{n_k}\}_{k=0}^\infty$
		that converges to $0 \leq L < 1$.
		By the uniform convergence of $f_n$ and the continuity of $f$,
		we conclude that
		\begin{equation*}
			f_{n_k}
			\left( a_{n_k} \right)
			\conv{k}{\infty}
			f(L).
		\end{equation*}
		But
		\begin{equation*}
			f_{n_k}
			\left( a_{n_k} \right)
			=
			\frac
			{K_{n_k}(
			\ei{\thnk{j+1}}, \ei{\thnk{j}}
			)}
			{K_{n_k}(
			\ei{\thnk{j}}, \ei{\thnk{j}}
			)}
			\equiv 0
		\end{equation*}
		while $f(L) \neq 0$ because $L$ is not a nonzero integer,
		which is a contradiction. Therefore,
		\begin{equation*}
			\underset{n\to\infty}{\liminf}
			\enskip a_n
			\geq 1.
		\end{equation*}
		On the other hand, note that $f(1)=0$.
		Due to Hurowitz's theorem,
		there is a sequence
		$\{x_n\}_{n=0}^\infty$
		such that $x_n$ is a zero of $f_n$,
		and $x_n \conv{n}{\infty} 1$.
		But all the zeros of $f_n$ are of the form
		$
		\frac{n}{2 \pi}
		(\thn{m} - \thn{j})
		$
		for some $m \neq j$,
		and thus $a_n$ is the smallest positive zero of $f_n$.
		It follows that for all large enough $n$,
		\begin{align*}
			a_n & \leq x_n \conv{n}{\infty} 1 \\
			& \Downarrow \\
			\underset{n\to\infty}{\limsup}
			\enskip a_n & \leq 1.
		\end{align*}
		We have found that $\underset{n\to\infty}{\lim} a_n = 1$,
		which means that
		\[
			n \zerodiff
			\conv{n}{\infty} 2\pi.
		\]
	\end{proof}



\begin{thebibliography}{9}

	\bibitem{last10}
		A.~Avila, Y.~Last and B.~Simon,
		\textit{Bulk universality and clock spacing of zeros for ergodic Jacobi matrices with a.c. spectrum},
		Analysis and PDE \textbf{3}
		(2010), 81--108.

	\bibitem{breuer11}
		J.~Breuer,
		\textit{Sine kernel asymptotics for a class of singular measures},
		J.\ Approx.\ Theory \textbf{163}
		(2011), 1478--1491.

	\bibitem{weissman}
		J.~Breuer and D.~Weissman,
		\textit{Level repulsion for Schr\"odinger operators with singular continuous spectrum},
		J.\ Spect.\ Theory \textbf{9} (2019), 429--451.

	\bibitem{cantero06}
		M.~J.~Cantero, L.~Moral and L.~Vel\'azquez,
		\textit{Measures on the unit circle and unitary truncations of unitary operators},
		J.\ Approx.\ Theory \textbf{139}
		(2006), 430--468.

\bibitem{Castillo} K.~Castillo, F.~Marcell\'an and M~N.~Rebocho, \textit{Zeros of paraorthogonal polynomials and linear spectral
    transformations on the unit circle}, Numer.\ Algorithms \textbf{71} (2016), 699--714.


	

	\bibitem{freud71}
		G.~Freud,
		\textit{Orthogonal Polynomials},
		Pergamon Press, Oxford-New York,
		1971.

\bibitem{Golinskii} L.~Golinskii, \textit{Quadrature formula and zeros of para-orthogonal polynomials on the unit circle} Acta Math.\ Hungar.\
    \textbf{96} (2002), 169--186.

	\bibitem{last99}
		S.~Jitomirskaya and Y.~Last,
		\textit{Power-law subordinacy and singular spectra, I. Half-line operators},
		Acta Math.\ \textbf{183}
		(1999), 171--189.

\bibitem{Jones} W.~B.~Jones, O.~Nj\aa stad and W.~J.~Thron, \textit{Moment theory, orthogonal polynomials, quadrature, and continued fractions
    associated with the unit circle}, Bull.\ London Math.\ Soc.\ \textbf{21} (1989), 113--152.

\bibitem{KN} R.~Killip and I.~Nenciu, \textit{Matrix models for circular ensembles}, Int.\ Math.\ Res.\ Not.\ 2004, no.\ 50, 2665--2701.

\bibitem{KS} R.~Killip and M.~Stoiciu, \textit{Eigenvalue statistics for CMV matrices: from Poisson to clock via random matrix ensembles},
    Duke Math.\ J.\ \textbf{146} (2009), 361--399.


 \bibitem{LastSimonAC} Y.~Last and B.~Simon, \textit{Eigenfunctions, transfer matrices, and absolutely continuous spectrum of one-dimensional
     Schrödinger operators}, Invent.\ Math.\ \textbf{135} (1999), 329--367.

	\bibitem{last08}
		Y.~Last and B.~Simon,
		\textit{Fine structure of the zeros of orthogonal polynomials, IV. A priori bounds and clock behavior},
		Comm.\ Pure Appl.\ Math.\ \textbf{61}
		(2008), 486--538.

	\bibitem{lubinsky07}
		E.~Levin and D.~S.~Lubinsky,
		\textit{Universality Limits Involving Orthogonal Polynomials on the Unit Circle},
		Computational Methods and Function Theory \textbf{7}
		(2007), 543--561.

	\bibitem{levin08}
		E.~Levin and D.~S.~Lubinsky,
		\textit{Applications of universality limits to zeros and reproducing kernels of orthogonal polynomials},
		J.\ Approx.\ Theory \textbf{150.1}
		(2008), 69--95.



\bibitem{LubinskyRev} D.~Lubinsky, \textit{An update on local universality limits for correlation functions generated by unitary ensembles},
    SIGMA \textbf{12} (2016), 078, 36pp.

\bibitem{Martinez} A.~Martinez-Finkelshtein, A.~Sri Ranga and D.~Veronese, \textit{Extreme zeros in a sequence of paraorthogonal polynomials
    and bounds for the support of the measure}, Math.\ Comp.\ \textbf{87} (2018), 261--288.

\bibitem{MSS} A.~Martinez-Finkelshtein, B.~Simanek and B.~Simon \textit{Poncelet's Theorem, paraorthogonal polynomials and the numerical
    range of compressed multiplication operators}, Adv.\ in Math.\ \textbf{349} (2019) 992--1035.


\bibitem{rogers} C.~A.~Rogers, \textit{Hausdorff Measures}, London, Cambridge University Press 1970.

\bibitem{Simanek1} B.~Simanek, \textit{Zeros of non-Baxter paraorthogonal polynomials on the unit circle}, Constr.\ Approx.\ \textbf{35}
    (2012), 107--121.

\bibitem{Simanek2} B.~Simanek, \textit{An electrostatic interpretation of the zeros of paraorthogonal polynomials on the unit circle}, SIAM
    J.\ Math.\ Anal.\ \textbf{48} (2016), 2250--2268.


\bibitem{Simanek3} B.~Simanek, \textit{Zero spacing of paraorthogonal polynomials on the unit circle}, Preprint, arXiv 1907.01604v1.


\bibitem{simon06} B.~Simon, \textit{Fine structure of the zeros of orthogonal polynomials on the unit circle, I. A tale of two pictures},
    Electronic Transactions on Numerical Analysis \textbf{25} (2006),268--328.

\bibitem{simon07Rankone} B.~Simon, \textit{Rank one perturbations and the zeros of paraorthogonal polynomials on the unit circle}, J.\ Math.\
    Anal.\ Appl.\ \textbf{329} (2007),
 376--382.

	\bibitem{simon08}
		B.~Simon,
		\textit{The Christoffel-Darboux Kernel},
		Proc.\ Sympos.\ Pure Math.\ \textbf{79},
		Amer.\ Math.\ Soc.,
		Providence, RI, 2008.

	\bibitem{simon09}
		B.~Simon,
		\textit{Orthogonal Polynomials on the Unit Circle},
		American Mathematical Society Colloquium Publications \textbf{54},
		American Mathematical Society,
		Providence, RI,
		2009.


	\bibitem{stoiciu} M.~Stoiciu, \textit{The statistical distribution of the zeros of random paraorthogonal polynomials on the unit circle}, J.\
    Approx.\ Theory \textbf{139} (2006), 29--64.

	\bibitem{wong07}
		M.~Wong,
		\textit{First and second kind paraorthogonal polynomials and their zeros},
		J. Approx. Theory \textbf{146.2}
		(2007), 282--293.

	\bibitem{zlatos04}
		A.~Zlato\v{s},
		\textit{Sparse potentials with fractional Hausdorff dimensions},
		J. Funct. Anal \textbf{207}
		(2004), 216--252.


\end{thebibliography}
\end{document}